\newcommand{\inner}[2]{\langle {#1} ,{#2} \rangle }
\newtheorem{theorem}{Theorem}[section]
\newtheorem{corollary}[theorem]{Corollary}
\newtheorem{definition}[theorem]{Definition}
\newtheorem{remark}[theorem]{Remark}
\newcommand*\E{\mathop{}\!\mathbb{E}}
\newcommand{\Exp}{\mathds{E}}
\newcommand{\ii}{\mbox{i}}
\newcommand{\Prob}{\mathop{}\!\mathbb{P}}
\newcommand{\vect}[1]{\vec{#1}}
\renewcommand{\vect}[1]{\boldsymbol{#1}}
\newcommand{\mat}[1]{\boldsymbol{#1}}
\newcommand*\dd{\mathop{}\!\mathrm{d}}
\author[H. \smash{Albrecher}]{Hansj\"org Albrecher}
\address[Hansj\"org Albrecher]{ Department of Actuarial Science, Faculty of Business and Economics and Swiss Finance Institute, University of Lausanne, CH-1015 Lausanne, Switzerland}
\email{{hansjoerg.albrecher@unil.ch}}
\author[M. \smash{Bladt}]{Martin Bladt}
\address[Martin Bladt]{Department of Actuarial Science, Faculty of Business and Economics, University of Lausanne, CH-1015 Lausanne, Switzerland}
\email{{martin.bladt@unil.ch}}
\author[M. \smash{Bladt}]{Mogens Bladt}
 \address[Mogens \smash{Bladt}]{Department of Mathematical Sciences, University of Copenhagen, Universitetsparken 5, DK-2100 Copenhagen \O, Denmark}
\email{bladt@math.ku.dk}
\title{Multivariate fractional phase--type distributions}
\date{Note, \today}
\begin{document}

%\tableofcontents
\begin{abstract}We extend the Kulkarni class of multivariate phase--type distributions in a natural time--fractional way to construct a new class of multivariate distributions with heavy-tailed Mittag-Leffler(ML)-distributed marginals. The approach relies on assigning rewards to a non--Mar\-ko\-vi\-an jump process with ML sojourn times. This new class complements an earlier multivariate ML construction \cite{multiml} and in contrast to the former also allows for tail dependence. We derive properties and characterizations of this class, and work out some special cases that lead to explicit density representations. 
\end{abstract} 
\maketitle

\section{Introduction}
The formulation of flexible and at the same time parsimonious models for stochastic phenomena is a crucial ingredient in the process of managing risks in various application areas of operations research. On the one hand, a given set of data should be represented reasonably well when putting them into the frame of a calibrated model (and finally replacing them by the latter for the further purposes in the risk analysis). Yet, on the other hand, one needs to avoid overfitting of data and resulting lack of robustness of fitted parameters when applied to updated data sets. In addition, in quite a number of situations (notably in quantitative risk management, see e.g.\ \cite{McN}) models are used to extrapolate beyond the range of existing data, and then capturing the main pattern is essential, but overfitting can lead to wrong conclusions about tail properties, particularly in higher dimensions, see also \cite{beirlant2006}. Another important aspect in this context is that it is quite useful, if models still allow for explicit densities or expressions for the relevant intended measures of risk. This leads to more efficient fitting procedures, and particularly allows to study sensitivities with respect to changes in model parameters in a more explicit way. \\
In this context, it is quite attractive to have a set of models that a priori is quite general and versatile, but then in the process of fitting the model to actual data reduces to a simpler model in some nested way, if the given data suggest that. One classical example of such a class of models in one dimension are the phase-type distributions (originally introduced by Neuts \cite{neuts75}), which builds upon the simplicity of an exponential distribution, but then concatenates exponential ingredients by considering the absorption time of a homogeneous Markov jump process on transient states (phases) into one absorption state with, if needed, many phases and arbitrary intensity matrix (the exponential being the special case of one transient phase only). The gained flexibility is enormous, as the resulting class of phase-type distributions can be shown to be dense (in the sense of weak convergence) in the class of all distributions on the positive half-line (see e.g. \cite{asmner}). However, the resulting model will only be parsimonious if the underlying risk is close to an exponential structure (e.g. in the tail), as otherwise the number of phases needed for a good fit will be excessive. Yet, on the computational level, the class of phase-type distributions is pleasant, as it can be understood as an (almost exhaustive) subclass of matrix-exponential distributions (that is, an exponential distribution with matrix parameter), for which explicit calculations are available (see e.g.\ \cite{bladt2017matrix}). If the underlying risk has a tail heavier than exponential, then it was recently shown in \cite{ab18inh} that extending the above construction principle to time-inhomogeneous Markov jump processes, adapts the fitting procedure to be built upon other than exponential random variables (namely transforms thereof), and thereby keeps the number of necessary parameters for a good fit very low (essentially leading to matrix-valued parameters of the new base distribution, like Pareto or Weibull). See also \cite{BladtNandayapa2018} for another alternative to modelling heavy-tailed data within the phase-type paradigm. Finally, in \cite{albrecher2019matrix} a random time transformation (based on a stable($\alpha$) random variable with $0<\alpha\le 1$) in the underlying Markov jump process was considered, which leads to a  Mittag-Leffler (ML) distribution as the base distribution, and a resulting flexible family of ML distributions with matrix argument (which later will be referred to as the fractional phase-type class PH$_{\alpha}$). The latter is typically heavy-tailed, but contains the phase-type distributions as the limiting special case $\alpha=1$. Hence the data fitting procedure can decide on which type of model is most suitable for a given data set.\\

For modelling in more than one dimension, Kulkarni \cite{Kulkarni:1989ti} formulated a multivariate version $\text{MPH}^*$ of the phase-type construction by having each component of a random vector collecting different rewards in every state of the (common) Markov jump process, thereby creating possibly dependent phase-type random variables, whose joint Laplace transform is still fully explicit. It could be shown that the resulting family of distributions is again dense in the class of all distributions on the positive orthant. In \cite{multiml}, this multivariate construction was extended to define a transparent class of multivariate generalized matrix ML (GMML) distributions by applying an independent stable($\alpha_i$) random time transformation to each component of the Kulkarni construction. Mathematically, this amounts to a replacement of each argument $\theta_i$ in the joint Laplace transform by its power $\theta_i^{\alpha_i}$, leading to explicit expressions for a number of particular cases (see \cite{multiml} for details). An unfortunate consequence of this procedure is that the resulting multivariate model is necessarily (asymptotically) tail-independent. This can also be seen from an alternative interpretation of the above resulting random vector as the one obtained from stopping each component of a multivariate stable$(\alpha_i)$ L\'{e}vy process (with independent components, cf.\ \cite{kyprianou2006introductory}) at the (dependent) multivariate phase-type times from the Kulkarni class. However, in many applications one observes possible dependence in the tails, and a proper modelling of that tail dependence is a particular concern in risk management. \\

In this paper, we propose another way to extend Kulkarni's multivariate phase-type class to formulate a new class $\text{MPH}^*_{\alpha}$ of  multivariate Mittag-Leffler distributions that does allow for tail dependence. Concretely, we return to the interpretation of a matrix Mittag-Leffler distributed random variable as the absorption time of a finite state-space semi-Markov process with (state-dependent) ML distributed sojourn times and one absorbing state, see \cite{albrecher2019matrix}. This involves the consideration of Kolmogorov forward equations with fractional derivates of order $\alpha$. We then impose the reward structure element of Kulkarni's multivariate construction on this semi-Markov process. Interestingly, the joint Laplace transform of the resulting random vector is again explicit, and on the analytical side differs from the one of the construction in \cite{multiml} merely by the fact that the power $\alpha$ is applied to the scalar product of each reward vector and the vector of Laplace arguments rather than to the Laplace arguments themselves (with the additional restriction that the value for $\alpha$ in each component now has to be the same). This approach leads to an attractive complement candidate for the modelling of multivariate matrix Mittag-Leffler distributions which allows for dependence in the tail. In a way, the present approach naturally extends Kulkarni's approach onto the appropriate more general semi-Markovian process governed by fractional Kolmogorov forward equations. As compared to the approach in \cite{multiml}, the stretching of time is here applied continuously until absorportion, rather than only on the final absorption times, allowing for a different degree of flexibility in the fine structure of the dependence modelling across the different random components. Figure \ref{mapofdists} depicts the relation between the respective models in the literature, and highlights the fact that the $\text{MPH}^*_{\alpha}$ class proposed here is a natural next step from a conceptual point of view.

\begin{figure}[h]
\includegraphics[width=14.5cm,trim=.5cm .5cm .5cm .5cm,clip]{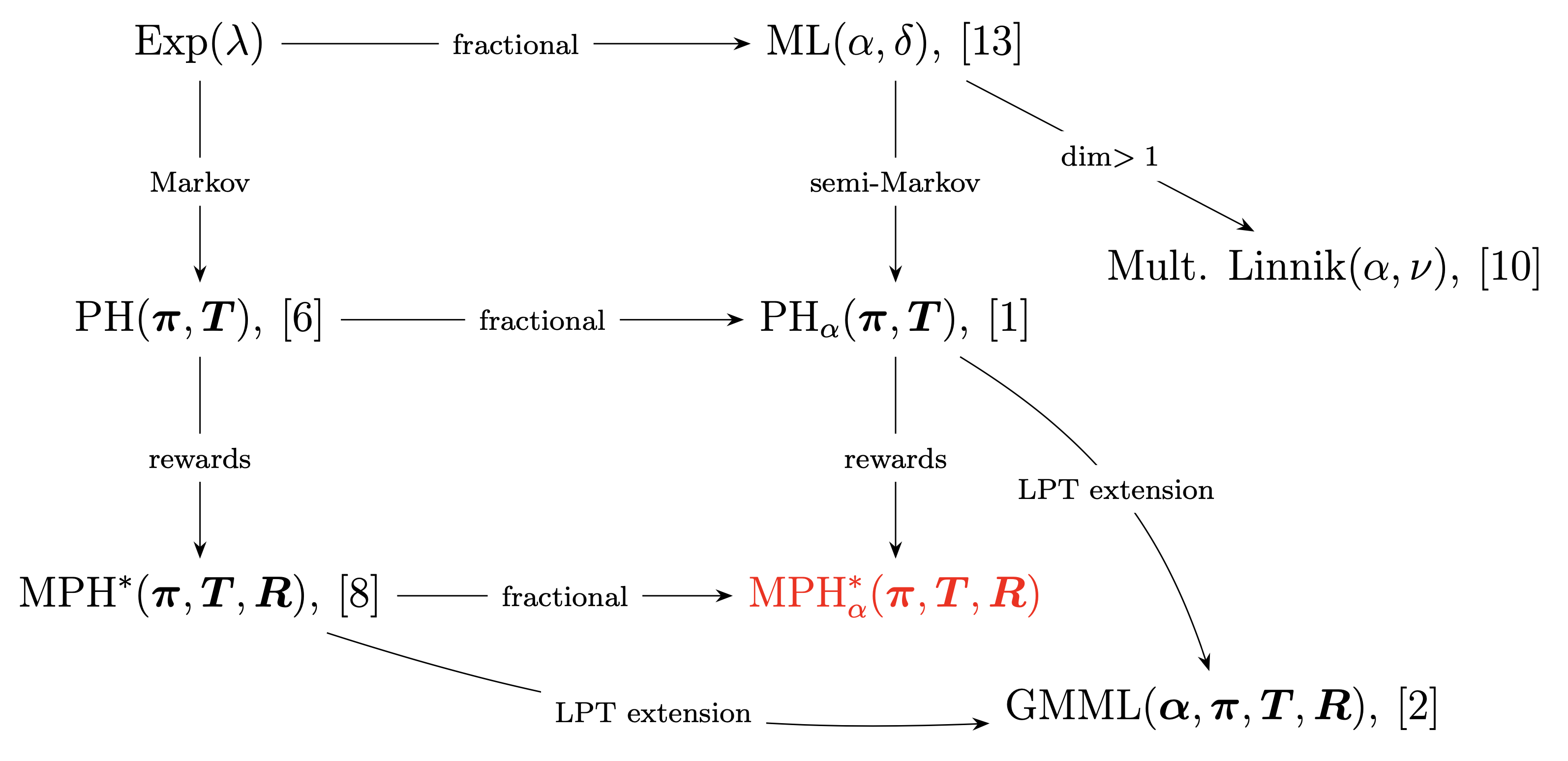}
	\caption{Schematic representation of distributions related to multivariate fractional phase--type distributions. Each arrow indicates a generalization.}\label{mapofdists}
\end{figure}

The remainder of the paper is organized as follows. In Section \ref{ren_sec_back} we review some relevant background on phase-type and (matrix) Mittag-Leffler distributions. Section \ref{ren_fractph_sec} develops the class $\text{MPH}_\alpha^\ast$ of multivariate fractional phase-type distributions as a reward-based multivariate construction using a time-fractional sample path approach with matrix ML distributed marginals. It is shown that this new class (as well as its extension to powers) is itself dense among all distributions on the positive orthant in several ways, and a characterization in terms of a product representation is provided. Finally, it is shown that any linear combination of the random components is again matrix ML distributed (possibly with an additional atom at zero). In Section \ref{sec:spec} we illustrate two particular cases that lead to explicit density representations. Section \ref{concl} concludes the paper.

\section{Background}\label{ren_sec_back}

\subsection{Phase--type distributions (PH)}
Consider a state space $E=\{1,2,\dots, p,p+1\}$, and a Markov jump process $\{X_t\}_{t\ge 0}$ evolving on $E$ such that the first $p$ states are transient and the state $p+1$ is absorbing. The intensity matrix of such a process has the form
\begin{equation*}
 \mat{\Lambda} 
=
\begin{pmatrix}
\mat{T} & \vect{t} \\
\vect{0} & 0 
\end{pmatrix},
\end{equation*}
where $\mat{T}$ is a sub-intensity matrix of dimension $p\times p$, consisting of jump rates between the transient states rates. We further specify an initial distribution, concentrated on the transient states $1,...,p$, by $\pi_k=\Prob(X_0=k)$ for $k=1,\dots, p.$ Thus, if we write $\vect{\pi}=(\pi_1,\dots,\pi_p)$, we have that $\vect{\pi}\vect{e}=1$, where $\vect{e}$ is the $p$-dimensional column vector of $1$'s. We also write by convention $$\vect{t}=-\mat{T}\vect{e},$$
which is a column vector whose elements are the intensities of jumping to the absorbing state. 
A phase--type distribution is defined as the absorption time of $X_t$, that is, if we let
$$\tau=\inf\{t>0|X_t=p+1\},$$
we say that $\tau$ follows a phase--type distribution with parameters $\vect{\pi},\mat{T}$, and write $\tau\sim \text{PH}(\vect{\pi},\mat{T})$. In general, the parametrization is non-identifiable, in the sense that several initial vectors and sub-intensity matrices can result in the same distribution.

The density and distribution function of $\tau\sim \text{PH}(\vect{\pi},\mat{T})$ are given by
\begin{align*}
f(x)&=\vect{\pi} e^{\mat{T}x}\vect{t}, \quad x>0,\\
F(x)&=1-\vect{\pi}e^{\mat{T}x}\vect{e},\quad x>0,\\
\end{align*}
where the exponential of a matrix $M$ is defined by the formula
$$\exp(\mat{M})=\sum_{n=0}^\infty \frac{\mat{M}^n}{n!} .$$
The Laplace transform is given by
\begin{align}
L(u)=\vect{\pi}(u\mat{I}-\mat{T})^{-1}\vect{t},\label{lu1}
\end{align}
and is always a rational function, well defined for $u>\mbox{Re}(\lambda_{m})$, where $\mbox{Re}$ denotes the real part and where $\lambda_m$ is the eigenvalue of $\mat{T}$ with largest real part, and $\mat{I}$ denotes the identity matrix.

The class of phase--type distributions is closed both under mixing and convolution, which means that also Erlang distributions, Coxian distribution and mixtures thereof are PH distributions. The class is also dense in the class of all distributions on the positive real line (in the sense of weak convergence). This means that any distribution with support on $\mathbb{R}_+$ may be approximated arbitrarily well by a
phase--type distribution (of sufficiently high dimension).

% The class PH is very rich. Some of its member distributions include mixture of exponentials, Erlang, Coxian, convolution of exponentials, and any mixture of them. In fact, since $p$ can be arbitrarily large, and one can approximate point masses with this class, and consequently any distribution on the positive real line, in the sense of weak convergence.

\subsection{Multivariate phase--type distributions (MPH$^*$)}
The class of MPH$^*$ was originally introduced in \cite{Kulkarni:1989ti} and is constructed as follows. Let $\tau \sim \mbox{PH}(\vect{\pi},\mat{T})$ and let $\{ X_t\}_{t\geq 0}$ be the underlying Markov jump. For $i=1,...,n$, let $\vect{r}_k = (r_{1k},r_{2k},...,r_{pk})^\prime$ (column vector) and
define
\[ Y_k = \int_0^\tau \sum_{i=1}^p r_{ik}1\{ X_t=i \} dt,  \ \ k=1,...,n .   \]
If we interpret $r_{ik}$ as the reward rate earned by the process $X_t$ when it is in state $i$, then $Y_k$ is the total amount of reward earned according to $\vect{r}_{k}$ prior to absorption. Let $\mat{R}$ denote the $p\times n$ matrix
\[ \mat{R} = ( \vect{r}_1,...,\vect{r}_n) . \] 
whose the columns consist of the different reward rates leading to the variables $Y_1,...,Y_n$. Then we say that 
$\vect{Y}=(Y_1,...,Y_n)$ has a multivariate distribution of the MPH$^*$ type and we write
% Denote by $Z_{ij}$ the exponential random variable corresponding to the time spent of the process $X_t$ on the $j$-th visit to state $i\in\{1,\dots,p\}$. Let the total number of visits to state $i$ be denoted by $N_i$. Given a $p\times n$ dimensional non-negative matrix $\mat{R}$, we say that a random $n$-dimensional vector $\vect{Y}=(Y_1,Y_2,\dots,Y_n)$ follows a multivariate phase--type distribution if one can write $\vect{Y}\sim \text{MPH}^\ast(\vect{\pi},\mat{T},\mat{R})$. 
% \begin{align*}
% Y_k=\sum_{i=1}^pR_{ik}\left[\sum_{j=1}^{N_i}Z_{ij}\right].
% \end{align*}
% In this case, one writes 
 $\vect{Y}\sim \text{MPH}^\ast(\vect{\pi},\mat{T},\mat{R})$.
 % Observe that the term in square brackets is nothing else than the total time spent until absorption in state $i$. Thus the matrix $\mat{R}$ is usually referred to as the reward matrix, and each coordinate of $\vect{Y}$ has the interpretation of being the total amount of rewards collected by the process according to the columns of $\mat{R}$. 
The multivariate Laplace transform of $\vect{Y}\sim \text{MPH}^\ast(\vect{\pi},\mat{T},\mat{R})$ is given by 
\begin{align}\label{eq:Kulkarni_r}
\E(e^{-\langle \vect{Y},\vect{\theta}\rangle})=\vect{\pi}(\mat{\Delta}(\mat{R}\vect{\theta})-\mat{T})^{-1}\vect{t},
\end{align}
where $\mat{\Delta}(\vect{v})$ denotes the diagonal matrix which has $\vect{v}$ as diagonal.

Multivariate phase--type distributions are dense on $\mathbb{R}_+^n$, and the marginals and their linear combinations are univariate phase--type distributions, which make them a very flexible and attractive class of distributions for statistical as well as non-statistical applications. However, statistical fitting of this class is still in an experimental stage, since the main dimensionality difficulties of the univariate case are exacerbated with the introduction of the additional parameters of $\mat{R}$. 

We refer the reader to \cite{bladt2017matrix} for a recent comprehensive text on phase--type distributions, both in the uni-- and multivariate cases.

\subsection{Univariate fractional phase--type distributions (PH$_\alpha$)}

A Mittag-Leffler (ML) distribution \cite{pillai} has a density of the form
\begin{equation}\label{mldens}
  f_{\lambda,\alpha}(x)
=\lambda x^{\alpha-1} E_{\alpha,\alpha}(-\lambda x^\alpha) ,\ \ \ \ \ \lambda>0,\ 0<\alpha\leq 1, \end{equation}
where
\[   E_{\alpha, \beta}(z)=\sum_{k=0}^{\infty} \frac{z^{k}}{\Gamma(\alpha k+\beta)}, \ \ \beta\in\mathbb{R}, \ \alpha >0  \]
is the so--called Mittag--Leffler function, and we denote the corresponding class by $\mbox{ML}(\alpha,\lambda)$. Note that Pillai's definition \cite{pillai} of the ML distribution is recovered with $\rho_i^{-\alpha} = \lambda_i$.
For $\alpha=1$, \eqref{mldens} reduces to the density of an exponential random variable. 
%we recover the exponential distribution and the semi--Markov process
%previously mentioned then becomes a Markov jump process. The time until absorption of the semi--Markov process was analysed in 
%\cite{albrecher2019matrix}, referred therein as a Matrix Mittag Leffler distribution, and which we for reasons that will become apparent shortly shall refer to as a fractional phase--type distribution (PH$_\alpha$) instead. 
Recently, in \cite{albrecher2019matrix}, a matrix version of the ML distribution with Laplace transform \begin{equation}
\vect{\pi}(u^\alpha \mat{I}- \mat{T})^{-1}\vect{t},  \quad 0<\alpha\le 1,\label{lu2}
\end{equation}
was introduced, which for $\alpha=1$ reduces to the one of a phase--type distribution (cf. \eqref{lu1}). For scalar $\mat{I}$ and $\mat{T}$ one recovers the classical ML distribution. While the class of distributions with Laplace transform \eqref{lu2} was referred to as a \textit{matrix ML} distribution in \cite{albrecher2019matrix}, we suggest to assign to it the additional name {\em fractional phase-type distribution} (PH$_\alpha(\vect{\pi},\:\vect{T}))$), as this will lead to a simple and somewhat more consistent nomenclature in the sequel. As shown in \cite{albrecher2019matrix}, the density and distribution function are given by
\begin{align*} 
f(x)&=x^{\alpha-1}\vect{\pi}\,E_{\alpha,\alpha}\left(\mat{T}x^\alpha \right)\,\vect{t},\\
F(x)&=1-\vect{\pi}E_{\alpha,1}\left(\mat{T}x^\alpha \right)\vect{e},
\end{align*}
where
\begin{align}\label{cauchyintegralML}
E_{\alpha, \beta}(\vect{T}x^\alpha)=\sum_{k=0}^{\infty} \frac{\vect{T}^{k}x^{\alpha k}}{\Gamma(\alpha k+\beta)}=\frac{1}{2\pi \ii}\oint_\gamma E_{\alpha,\beta}(zx^\alpha )(z\mat{I}-\mat{T})^{-1}\dd z ,
\end{align}
with $\gamma$ denoting a simple path enclosing the eigenvalues of $\mat{T}$. For $X\sim$PH$_\alpha(\vect{\pi},\:\vect{T})$ we have the product representation
\begin{align}\label{prodrepresentation_r}
X\stackrel{d}{=}W^{1/\alpha}S_\alpha,
\end{align}
where $W\sim \mbox{PH}(\vect{\pi},\:\vect{T})$, and $S_\alpha$ is an independent positive stable random variable, cf. \cite{albrecher2019matrix}. Note again that for  $\alpha=1$ we obtain the PH distributions as a special case. 
%Power transformations of distributions from PH$_\alpha$  were shown to be effective in the modeling of insurance claims, and some path properties of semi-Markov type were shown. 

% Subsequently, in \cite{albrecher2019multi}, a generalization to the multivariate case was studied, where the main idea is to plug in a $\text{MPH}^\ast$ variable in Equation \ref{prodrepresentation_r} in place of the PH variable (see also \cite{lim} for a multivariate extension of the related Linnik distribution, from another point of view). The resulting multivariate distribution has several desirable properties, but it does not have a path-wise interpretation in terms of rewards with respect to the univariate MML distribution. In the following section we will take up the path representation of univariate MML distributions and define random vectors by collecting rewards until absorption. 

\section{Multivariate fractional phase--type distributions}\label{ren_fractph_sec}
\subsection{The construction}
Following \cite{albrecher2019matrix}, we begin by constructing a semi-Markov process which has an absorption time given by a PH$_\alpha$ distribution. Let $E=\{1,2,...,p,p+1\}$ be the state space  and
let $\mat{Q}=\{ q_{ij} \}_{i,j\in E}$ denote the transition matrix of a Markov chain $\{ Y_n \}_{n\in \mathbb{N}}$ on $E$, where the first $p$ states are transient and state $p+1$ is absorbing. This means that $\{ Y_n\}_{n\in\mathbb{N}}$ has a transition matrix of the form
\[   \mat{Q} = \begin{pmatrix}
\mat{Q}^1 & \vect{q}^1 \\
\vect{0} & 1 
\end{pmatrix}. \] 
We assume that $q_{ii}=0$ for all $i\neq p+1$. This chain will be the embedded Markov chain in a Markov renewal process with Mittag-Leffler distributed holding times defined below. Let $\alpha \in (0,1]$ and $\lambda_i>0$. For the states $i=1,...,p$, let $T^i_n$, $n=1,2,...$ be independent $\mbox{ML}(\alpha,\lambda_i)$--distributed random variables.
Let
\[ S_n = \sum_{i=1}^n T^{Y_i}_i , \ \ n\geq 1 ,\]
and $S_0=0$.
Define then the semi--Markov process 
\begin{equation}
  X_t = \sum_{n=1}^\infty Y_{n-1} 1\{ S_{n-1}\leq t <S_n  \}  . \label{def:X-process_r},\; t\ge 0.
\end{equation}
The interpretation is that $\{ X_t\}_{t\geq 0}$ jumps between states according to the dynamics of the Markov chain $Y_n$, and $S_n$ denotes the time of the $n$'th jump. The holding time in state $i<p+1$ is $\mbox{ML}(\alpha,\lambda_i)$. The construction is schematically shown in Figure \ref{fig:X-process}.

\begin{figure}[H]
\begin{tikzpicture}[scale=0.75,domain=-1:14]
\draw[->] (0,0)--(14.0,0) node[right] {$t$};
\draw[-] (0,0) --(0,2);
\draw[-,dashed] (0,2)--(0,3);
\draw[->] (0,3)--(0,4) node[above] {$X_t$};
\foreach \y/\ytext in {0.5/1, 1/2, 1.5/3,3.5/p}
\draw[shift={(0,\y)}] (-2pt,0pt) -- (2pt,0pt) node[left] {$\ytext$};

\draw[color=red,very thick,domain=0:1.90] plot (\x,{1.5});
\draw[color=red,very thick,domain=2.1:4.4] plot (\x,{0.5});
\draw[color=red,very thick,domain=4.5:7.9] plot (\x,{3.5});
\draw[color=red,very thick,domain=8.1:10.9] plot (\x,{0.5});
\draw[color=red,very thick,domain=11.1:13] plot (\x,{1.0});

\draw[color=red] (2,1.5) circle (3pt); 
\draw[color=red,fill] (2,0.5) circle (3pt); 
\draw[color=red] (4.5,0.5) circle (3pt); 
\draw[color=red,fill] (4.5,3.5) circle (3pt); 
\draw[color=red] (8.0,3.5) circle (3pt); 
\draw[color=red,fill] (8,0.5) circle (3pt); 
\draw[color=red] (11.0,0.5) circle (3pt); 
\draw[color=red,fill] (11.0,1.0) circle (3pt);

\draw[thick, snake=brace,segment aspect=0.5] (0,1.7) -- (2,1.7);
\draw[thick] (1.5,2.0) node[above,rotate=0] {$\sim {f_{3}}$};

\draw[thick, snake=brace,segment aspect=0.5] (2,0.7) -- (4.5,0.7);
\draw[thick] (3.75,1.0) node[above,rotate=0] {$\sim {
    f_{1}}$};

\draw[thick, snake=brace,segment aspect=0.5] (4.5,3.7) -- (8,3.7);
\draw[thick] (6.8,4.0) node[above,rotate=0] {$\sim {
    f_{p}}$};

\draw[thick, snake=brace,segment aspect=0.5] (8,0.7) -- (11,0.7);
\draw[thick] (9.9,1.0) node[above,rotate=0] {$\sim {
    f_{1}}$};

\foreach \x/\xtext in {2.0/S_1,4.5/S_2,8/S_3,11/S_4}
\draw[shift={(\x,0)}] (0pt,2pt) -- (0pt,-2pt) node[below] {$\xtext$};
\end{tikzpicture}
\caption{\label{fig:X-process} Construction of a semi--Markov process based on Mittag--Leffler distributed interarrivals.}
\end{figure}
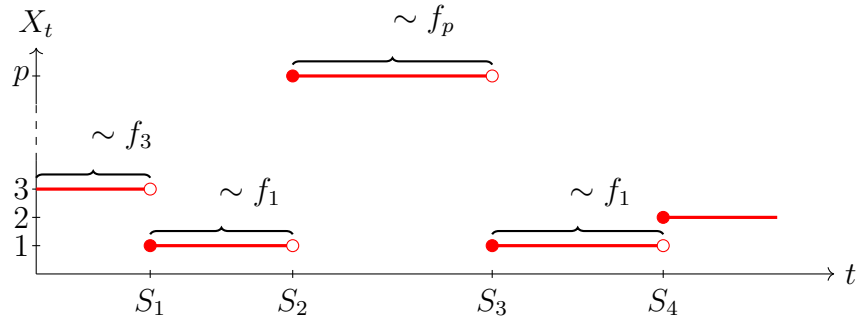

Define the intensity matrix $\mat{\Lambda}=\{ \lambda_{ij}\}_{i=1,...,p+1}$ by 
\[  \lambda_{ij}=\lambda_i q_{ij}, \ i\neq j, \ \ \mbox{and} \ \ \lambda_{ii}=-\lambda_i = \sum_{k\neq i}\lambda_{ik} ,\: i\le p ,  \]
and $\lambda_{p+1,i}=0$, and let 
\[  p_{ij}(t)  = \Prob (X_t=j | X_0=i), \ \ \mat{P}(t) = \{ p_{ij}(t) \}_{i,j=1,...,p} ,\]
be the probabilities that describe the dynamics of the process over the transient states. Then we may also write the matrix $\mat{\Lambda}$ in the following way 
\begin{equation}
 \mat{\Lambda} 
=
\begin{pmatrix}
\mat{T} & \vect{t} \\
\vect{0} & 0 
\end{pmatrix} . \label{eq:PH-structure-matrix_r}
\end{equation}

The matrix $\mat{\Lambda}$ can be associated with the intensity matrix for some Markov jump process. However, it is important to make the distinction that here we instead consider the semi-Markov process $X_t$, for which the dynamics on the transient states are not based on the exponential function but rather the Mittag-Leffler function, as is shown in the following result:

\begin{theorem}\label{transprobfrac} {\rm \cite{albrecher2019matrix}}
Let $\{ X_t\}_{t\geq 0}$ be the semi-Markov process constructed above. Then
\[   \mat{P}(t) = {E}_{\alpha,1}(\mat{T}t^\alpha) .  \] 
\end{theorem}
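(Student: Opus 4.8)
The plan is to derive a Markov renewal equation for $\mat{P}(t)$ by conditioning on the epoch of the first jump, pass to Laplace transforms where the convolution disentangles into a product, solve the resulting algebraic matrix equation, and finally recognize the solution as the Laplace transform of the matrix Mittag--Leffler function $E_{\alpha,1}(\mat{T}t^\alpha)$. Write $f_i$ and $\bar F_i$ for the density and survival function of the $\mathrm{ML}(\alpha,\lambda_i)$ holding time in a transient state $i$, and let $\mat{\Delta}(\vect{\lambda})=\diag(\lambda_1,\dots,\lambda_p)$. Starting from $X_0=i$, either no jump has occurred by time $t$ (probability $\bar F_i(t)$, which forces $X_t=i$), or the first jump happens at some epoch $s\le t$ to a transient state $k$ with probability $q_{ik}$, after which the process restarts in $k$ by the (semi-)Markov property. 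Collecting these alternatives in matrix form yields
\[
\mat{P}(t)=\mat{\Delta}(\bar{\vect{F}}(t))+\int_0^t \mat{\Delta}(\vect{f}(s))\,\mat{Q}^1\,\mat{P}(t-s)\,\dd s,
\]
where $\mat{\Delta}(\vect{f}(s))$ and $\mat{\Delta}(\bar{\vect{F}}(t))$ are the diagonal matrices carrying the entries $f_i(s)$ and $\bar F_i(t)$, respectively. Keeping the holding-time density in a diagonal matrix placed to the \emph{left} of $\mat{Q}^1$ is essential, since the density must attach to the departure state $i$.

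Next I would take Laplace transforms. Writing $\hat{\mat{P}}(u)=\int_0^\infty e^{-ut}\mat{P}(t)\,\dd t$ and using the standard scalar transforms $\int_0^\infty e^{-ut}\bar F_i(t)\,\dd t=u^{\alpha-1}/(u^\alpha+\lambda_i)$ and $\int_0^\infty e^{-ut}f_i(t)\,\dd t=\lambda_i/(u^\alpha+\lambda_i)$, the convolution turns into a product and the equation becomes
\[
\hat{\mat{P}}(u)=u^{\alpha-1}\bigl(u^\alpha\mat{I}+\mat{\Delta}(\vect{\lambda})\bigr)^{-1}+\bigl(u^\alpha\mat{I}+\mat{\Delta}(\vect{\lambda})\bigr)^{-1}\mat{\Delta}(\vect{\lambda})\,\mat{Q}^1\,\hat{\mat{P}}(u).
\]

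The key algebraic input is the identity $\mat{T}=\mat{\Delta}(\vect{\lambda})(\mat{Q}^1-\mat{I})$, equivalently $\mat{\Delta}(\vect{\lambda})\mat{Q}^1=\mat{T}+\mat{\Delta}(\vect{\lambda})$, which follows directly from $\lambda_{ij}=\lambda_i q_{ij}$ together with $q_{ii}=0$. Substituting it, the coefficient of $\hat{\mat{P}}(u)$ collapses, since
\[
\mat{I}-\bigl(u^\alpha\mat{I}+\mat{\Delta}(\vect{\lambda})\bigr)^{-1}\bigl(\mat{T}+\mat{\Delta}(\vect{\lambda})\bigr)=\bigl(u^\alpha\mat{I}+\mat{\Delta}(\vect{\lambda})\bigr)^{-1}\bigl(u^\alpha\mat{I}-\mat{T}\bigr);
\]
multiplying through by $\bigl(u^\alpha\mat{I}+\mat{\Delta}(\vect{\lambda})\bigr)$ cancels the diagonal prefactors and leaves $(u^\alpha\mat{I}-\mat{T})\hat{\mat{P}}(u)=u^{\alpha-1}\mat{I}$, so that $\hat{\mat{P}}(u)=u^{\alpha-1}(u^\alpha\mat{I}-\mat{T})^{-1}$. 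It then remains to identify this as the transform of $E_{\alpha,1}(\mat{T}t^\alpha)$: transforming the series in \eqref{cauchyintegralML} term by term via $\int_0^\infty e^{-ut}t^{\alpha k}/\Gamma(\alpha k+1)\,\dd t=u^{-\alpha k-1}$ gives $\int_0^\infty e^{-ut}E_{\alpha,1}(\mat{T}t^\alpha)\,\dd t=u^{-1}(\mat{I}-u^{-\alpha}\mat{T})^{-1}=u^{\alpha-1}(u^\alpha\mat{I}-\mat{T})^{-1}$, which coincides with $\hat{\mat{P}}(u)$, and uniqueness of the Laplace transform delivers $\mat{P}(t)=E_{\alpha,1}(\mat{T}t^\alpha)$.

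I expect the main obstacle to be the justification rather than the algebra: one must ensure the renewal equation is well posed and the manipulations are valid for $u$ to the right of the spectral abscissa of $\mat{T}$, so that $(u^\alpha\mat{I}-\mat{T})^{-1}$ exists and the Neumann series $\sum_{k\ge 0}(u^{-\alpha}\mat{T})^k$ converges; one must also legitimize the term-by-term Laplace inversion (interchange of summation and integration) and confirm that the resulting $E_{\alpha,1}(\mat{T}t^\alpha)$ is a genuine (sub-stochastic) transition function. The one spot where errors most easily creep in is the diagonal bookkeeping in the matrix renewal equation described above, where the holding-time density must be attached to the correct departure state.
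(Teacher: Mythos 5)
Your proof is correct. Note that the paper itself gives no proof of this statement: it is imported verbatim from \cite{albrecher2019matrix}, where the argument is essentially the one you give (a Markov renewal equation obtained by conditioning on the first jump, solved in the Laplace domain). Your key steps all check out: the backward renewal equation with the holding-time density in a diagonal matrix to the left of $\mat{Q}^1$ correctly attaches $f_i$ to the departure state; the transforms $\lambda_i/(u^\alpha+\lambda_i)$ and $u^{\alpha-1}/(u^\alpha+\lambda_i)$ are the right ones for the $\mathrm{ML}(\alpha,\lambda_i)$ density and survival function; the identity $\mat{\Delta}(\vect{\lambda})\mat{Q}^1=\mat{T}+\mat{\Delta}(\vect{\lambda})$ follows from $t_{ij}=\lambda_iq_{ij}$, $q_{ii}=0$ and $t_{ii}=-\lambda_i$; and $u^{\alpha-1}(u^\alpha\mat{I}-\mat{T})^{-1}$ is indeed the transform of $E_{\alpha,1}(\mat{T}t^\alpha)$ for $u^\alpha$ beyond the spectral radius of $\mat{T}$, with uniqueness of the Laplace transform closing the argument.
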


\noindent Define the Caputo derivative as the following fractional generalization of the ordinary differentiation operator,
$$
_{0}^{c} D_{t}^{\alpha} x(t)=\frac{1}{\Gamma(n-\alpha)} \int_{0}^{t}(t-\tau)^{n-\alpha-1} x^{(n)}(\tau) \dd \tau.
$$
Then Theorem \ref{transprobfrac} yields the following forward and backward type of Kolmogorov fractional differential equations:
\begin{corollary}
$_{0}^{c} D_{t}^{\alpha} \mat{P}(t)=\mat{T} \mat{P}(t)= \mat{P}(t)\mat{T}$.
\end{corollary}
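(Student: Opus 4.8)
The plan is to exploit the closed-form expression $\mat{P}(t)=E_{\alpha,1}(\mat{T}t^\alpha)$ furnished by Theorem \ref{transprobfrac} and to apply the Caputo operator directly to the defining power series. Since
$$
E_{\alpha,1}(\mat{T}t^\alpha)=\sum_{k=0}^\infty \frac{\mat{T}^k t^{\alpha k}}{\Gamma(\alpha k+1)},
$$
the task reduces to computing $_{0}^{c}D_{t}^{\alpha}t^{\alpha k}$ for each $k$ and then summing.

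First I would record the action of the Caputo derivative on a single power. For $0<\alpha\le 1$ we have $n=1$, so for $\beta>0$,
$$
_{0}^{c}D_{t}^{\alpha}t^{\beta}=\frac{1}{\Gamma(1-\alpha)}\int_0^t (t-\tau)^{-\alpha}\,\beta\tau^{\beta-1}\dd\tau=\frac{\Gamma(\beta+1)}{\Gamma(\beta-\alpha+1)}\,t^{\beta-\alpha},
$$
where the integral is evaluated through the Beta function $B(\beta,1-\alpha)$; moreover the derivative of a constant ($\beta=0$) vanishes. Setting $\beta=\alpha k$ gives $_{0}^{c}D_{t}^{\alpha}t^{\alpha k}=\big(\Gamma(\alpha k+1)/\Gamma(\alpha(k-1)+1)\big)\,t^{\alpha(k-1)}$ for $k\ge 1$, while the $k=0$ term contributes nothing.

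Second, applying the operator term by term and cancelling the factor $\Gamma(\alpha k+1)$ yields
$$
_{0}^{c}D_{t}^{\alpha}\mat{P}(t)=\sum_{k=1}^\infty \frac{\mat{T}^k t^{\alpha(k-1)}}{\Gamma(\alpha(k-1)+1)}=\sum_{m=0}^\infty \frac{\mat{T}^{m+1}t^{\alpha m}}{\Gamma(\alpha m+1)},
$$
after the reindexing $m=k-1$. Factoring the matrix $\mat{T}$ out on the left gives $\mat{T}\,E_{\alpha,1}(\mat{T}t^\alpha)=\mat{T}\mat{P}(t)$, and, using $\mat{T}^{m+1}=\mat{T}^m\mat{T}$ with $\mat{T}$ commuting with every power of itself, factoring it out on the right gives $\mat{P}(t)\mat{T}$, which establishes both equalities.

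The step requiring genuine care---and the main obstacle---is the justification of the term-by-term application of the Caputo operator, that is, the interchange of the convolution integral with the infinite sum. Since $E_{\alpha,1}(\mat{T}z)$ is an entire function of $z$, the series and its termwise Caputo derivative converge uniformly on compact $t$-intervals, so one may invoke dominated convergence (bounding $\|\mat{T}^k\|\le \|\mat{T}\|^k$ and controlling the resulting numerical series) to pass the operator inside the sum; this is where I would spend the effort, the remaining algebra being the routine Gamma-function bookkeeping above.
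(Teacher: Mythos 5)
Your proof is correct, but it follows a genuinely different route from the paper. The paper quotes the known fact that $x(t)=E_{\alpha,1}(at^\alpha)$ is the unique solution of the scalar equation $_{0}^{c}D_{t}^{\alpha}x(t)=ax(t)$, and then lifts this to the matrix setting via the Cauchy contour-integral representation \eqref{cauchyintegralML}: the Caputo operator is passed under the contour integral, the scalar result replaces $_{0}^{c}D_{t}^{\alpha}E_{\alpha,1}(zt^\alpha)$ by $zE_{\alpha,1}(zt^\alpha)$, and both equalities $\mat{T}\mat{P}(t)=\mat{P}(t)\mat{T}$ follow at once because the resolvent $(z\mat{I}-\mat{T})^{-1}$ commutes with $\mat{T}$. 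You instead work directly with the power series, computing $_{0}^{c}D_{t}^{\alpha}t^{\beta}=\frac{\Gamma(\beta+1)}{\Gamma(\beta-\alpha+1)}t^{\beta-\alpha}$ via the Beta integral and reindexing; your Gamma-function bookkeeping is correct, and you rightly flag the interchange of the Caputo convolution integral with the infinite sum as the one step needing justification (your sketch via uniform convergence on compacts and the bound $\|\mat{T}^k\|\le\|\mat{T}\|^k$ is adequate, though note that for $k=1$ the resulting term $t^{\alpha(1-1)}$ is constant, so the termwise series still converges locally uniformly after differentiation). The trade-off is that your argument is more elementary and self-contained---it does not presuppose the scalar uniqueness result---but it must shoulder the analytic interchange explicitly, whereas the paper's functional-calculus route delegates all convergence issues to the scalar case and to the finite contour integral, and makes the two-sided commutation $\mat{T}\mat{P}(t)=\mat{P}(t)\mat{T}$ entirely transparent.
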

\begin{proof}
It is well-known that the unique solution to the scalar fractional differential equation 
$$_{0}^{c} D_{t}^{\alpha} x(t)=a x(t), \quad t \geq 0,$$
is given in terms of the Mittag-Leffler function 
$$x(t)=E_{\alpha,1}\left(a t^{\alpha}\right).$$ The extension to the matrix case now follows from the representation \eqref{cauchyintegralML}:
\begin{align*}
_{0}^{c} D_{t}^{\alpha} \mat{P}(t)&=  \frac{1}{2\pi \ii}\oint_\gamma {_{0}^{c} D_{t}^{\alpha}}  E_{\alpha,\beta}(zt^\alpha )(z\mat{I}-\mat{T})^{-1}\dd z\\
&= \frac{1}{2\pi \ii}\oint_\gamma z  E_{\alpha,\beta}(zt^\alpha )(z\mat{I}-\mat{T})^{-1}\dd z ,
\end{align*}
but the latter equals both $\mat{T} \mat{P}(t)$ and $\mat{P}(t)\mat{T}$.
\end{proof}

% \noindent Inspired by this result, we define
% \begin{definition}\normalfont
% Consider the semi--Markov process $\{X_t\}_{t\geq 0}$, \eqref{def:X-process_r}, with Mittag--Leffler inter--arrival sojourn times, \eqref{def:ML-inter-arrivals_r}, and a intensity matrix
% \[ \mat{\Lambda} 
% =
% \begin{pmatrix}
% \mat{T} & \vect{t} \\
% \vect{0} & 0 
% \end{pmatrix} 
%   .\]
%   Let $\tau$ denote the time until absorption, i.e.
%   \[  \tau = \inf\{ t>0 : X_t = p+1 \}  . \]
% Then we say that $\tau $ has a fractional phase--type distribution and we write $\tau \sim \mbox{PH}_\alpha(\vect{\pi},\mat{T})$.  
% \end{definition}

% \noindent The absorption time of the semi-Markov process coincides with the fractional extension of phase--type distributions, namely the MML distributions:

\begin{theorem}{\rm \cite{albrecher2019matrix}}\label{Cor:MML-of-sample-path-model}
Let $\{X_t\}_{t\ge 0}$ be a semi-Markov process constructed as above, with $\mat{\Lambda}$ given by \eqref{eq:PH-structure-matrix_r}. 
Let $\tau=\inf\{t\ge 0:\, X_t= p+1\}$ denote the time until absorption. 
Then $\tau$ has a PH$_\alpha(\vect{\pi},\mat{T})$ distribution, with cumulative distribution function given by
$$
F_\tau(u)=1-\vect{\pi} {E}_{\alpha,1}(\mat{T} u^\alpha) \vect{e} .
$$
\end{theorem}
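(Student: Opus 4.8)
The plan is to recognise this statement as an essentially immediate consequence of Theorem \ref{transprobfrac}, which already identifies the transient transition matrix $\mat{P}(t)$ with the Mittag--Leffler matrix function $E_{\alpha,1}(\mat{T}t^\alpha)$. The only genuinely new ingredient is to translate the event $\{\tau>u\}$ into a statement about the location of $X_u$ at the single time $u$, after which everything reduces to the already-established form of $\mat{P}(u)$.

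First I would observe that, since the state $p+1$ is absorbing for the embedded chain $\{Y_n\}$ (and hence for the constructed process $\{X_t\}$ of \eqref{def:X-process_r}), once the process enters $p+1$ it remains there forever. Consequently, for the absorption time $\tau=\inf\{t\ge 0:\,X_t=p+1\}$ one has the pathwise equivalence $\{\tau>u\}=\{X_u\in\{1,\dots,p\}\}$, i.e.\ the process has not been absorbed by time $u$ precisely when it still occupies a transient state at time $u$. This step uses only that $p+1$ is absorbing and needs no property specific to the Mittag--Leffler holding times.

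Next I would compute the survival function by conditioning on the starting state and using the initial distribution $\vect{\pi}$ of $X_0$, which is concentrated on the transient states. Summing the transient-to-transient transition probabilities gives
$$
\Prob(\tau>u)=\sum_{i=1}^p \pi_i \sum_{j=1}^p p_{ij}(u)=\vect{\pi}\,\mat{P}(u)\,\vect{e}.
$$
Substituting $\mat{P}(u)=E_{\alpha,1}(\mat{T}u^\alpha)$ from Theorem \ref{transprobfrac} then yields $\Prob(\tau>u)=\vect{\pi}\,E_{\alpha,1}(\mat{T}u^\alpha)\,\vect{e}$, so that $F_\tau(u)=1-\vect{\pi}\,E_{\alpha,1}(\mat{T}u^\alpha)\,\vect{e}$. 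Comparing this expression with the distribution function of the PH$_\alpha(\vect{\pi},\mat{T})$ law recalled in Section \ref{ren_sec_back} identifies the distribution of $\tau$ as PH$_\alpha(\vect{\pi},\mat{T})$, which completes the argument.

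Since the analytic heavy lifting, namely solving the fractional Kolmogorov equations to obtain the Mittag--Leffler form of $\mat{P}(t)$, has already been carried out in Theorem \ref{transprobfrac}, I do not expect any serious obstacle here. The only point requiring mild care is the measurability and pathwise justification of the equivalence $\{\tau>u\}=\{X_u\in\{1,\dots,p\}\}$ in the semi-Markov setting; this rests solely on the absorbing nature of $p+1$ together with the right-continuous construction in \eqref{def:X-process_r}. As a consistency check one could alternatively verify the claim through the Laplace transform \eqref{lu2}, but the direct route via the survival function is cleaner given Theorem \ref{transprobfrac}.
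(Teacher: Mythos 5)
Your argument is correct: the identity $\{\tau>u\}=\{X_u\in\{1,\dots,p\}\}$ (valid because $p+1$ is absorbing) combined with $\Prob(\tau>u)=\vect{\pi}\,\mat{P}(u)\,\vect{e}$ and Theorem \ref{transprobfrac} gives exactly the stated distribution function, and matching it against the CDF of PH$_\alpha(\vect{\pi},\mat{T})$ recalled in Section \ref{ren_sec_back} finishes the identification. The paper itself states this theorem as imported from \cite{albrecher2019matrix} without reproducing a proof, and your derivation is the standard one that underlies that reference, so there is nothing to add beyond noting that all the real work indeed sits in Theorem \ref{transprobfrac}.
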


With this representation we are now ready to impose a reward structure on the different states of the process, thereby creating a dependent random vector in a way that extends the $\text{MPH}^\ast$ naturally.

For the absorption time $\tau$ as defined in Theorem \ref{Cor:MML-of-sample-path-model}, let now $r_{ik}$, $i=1,...,p$, $k=1,...,n$ be non--negative numbers and define
\[  Y_k = \int_0^\tau \sum_{i=1}^p r_{ik}1\{ X_t=i\} dt ,\ \ k=1,...,n.   \]
Form the column vectors $\vect{r}_k = (r_{1k},r_{2k},...,r_{pk})$, $k=1,...,n$, and matrix
\[ \mat{R} = ( \vect{r}_1,...,\vect{r}_n) . \]
The random variable $Y_k$ is interpreted as the total reward earned until absorption of $\{ X_t\}$, where $r_{ik}$ is the reward earned during sojourns in state $i$ of the variable $k$. Hence column $k$ of $\mat{R}$ defines a reward structure which defines variable $Y_k$. 
  See Figure \ref{MPH*} for a visual representation of the construction.

\begin{figure}[h]
\centering
\includegraphics[width=14cm,trim=8cm 8cm 8cm 8cm,clip]{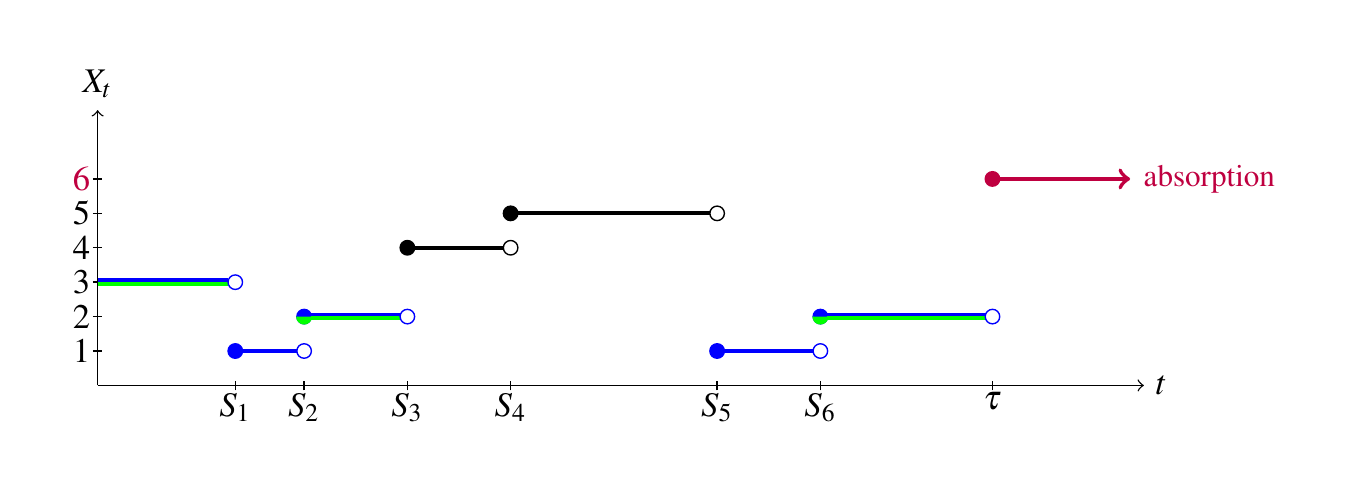}
\caption{Visual representation of the construction of the $\text{PH}_\alpha^\ast$ class. Here, three dimensions are considered: the first collects rewards during the blue holding times, corresponding to the first three states; the second during black holding times (independent of the first, in this case); and the third during green holding times (independent of the second, but not independent of the first).} 
\label{MPH*}
\end{figure}

We are interested in studying the joint distribution of 
 $\vect{Y}=(Y_1,....,Y_n)$. To this end, let $\vect{\theta}=(\theta_1,...,\theta_n)$ and 
 \[  H_i(\vect{\theta}) = \Exp \left. \left( e^{ -\inner{\vect{Y}}{\vect{\theta}}}\right| X_0=i \right) .  \]
 Condition on the first sojourn time $Z_{i1}$ in state $i$, which has a Mittag--Leffler distribution with parameters $(\lambda_i,\alpha)$. Let $\vect{Y}_{i1}$ denote the corresponding vector of rewards earned during 
 $[0,Z_{i1})$ and let $\vect{Y}_r$ denote the remaining rewards earned during $[Z_{i1},\tau)$. Then 
 $\vect{Y}=\vect{Y}_{i1}+\vect{Y}_r$ and $\vect{Y}_{i1}=Z_{i1} \vect{r}_i$. By the Markov renewal property,
 \begin{eqnarray*}
 \Exp\left. \left( e^{-\inner{\vect{Y}}{\vect{\theta}}} \right| X_0 =i  \right)&=& \Exp\left. \left( e^{-\inner{\vect{Y}_{i1}}{\vect{\theta}}} \right| X_0 =i  \right)\Exp\left. \left( e^{-\inner{\vect{Y}_r}{\vect{\theta}}} \right| X_0 =i, X_{Z_{i1}}  \right) \\
 &=& \Exp\left. \left( e^{-Z_{i1}\inner{\vect{r}_{i}}{\vect{\theta}}} \right| X_0 =i  \right)\Exp\left. \left( e^{-\inner{\vect{Y}_r}{\vect{\theta}}} \right| X_0 =i, X_{Z_{i1}}  \right)
 \end{eqnarray*}
Since $Z_{i1}$ is Mittag--Leffler distributed with parameters $(\lambda_i,\alpha)$, one gets 
\[  \Exp\left. \left( e^{-Z_{i1}\inner{\vect{r}_{i}}{\vect{\theta}}} \right| X_0 =i  \right) =  \frac{1}{1+ \inner{\vect{r}_i}{\vect{\theta}}^{\alpha} \lambda_i^{-1}}  . \]
 Recalling that $\mat{Q}=\{ q_{ij}\}$ contains the transition probabilities for the embedded Markov chain, we then have by a first step argument that
 \begin{eqnarray*}
 H_i(\vect{\theta})&=& \frac{1}{1+ \inner{\vect{r}_i}{\vect{\theta}}^{\alpha} \lambda_i^{-1}}\left(
q_{i,p+1} + \sum_{j\neq i} q_{ij} H_j(\vect{\theta}) 
    \right) .
 \end{eqnarray*}
 Using that $t_{ij}=\lambda_i q_{ij}$, $t_i=q_{i,p+1}\lambda_i$ we get that
 \[  \lambda_i H_i(\vect{\theta}) + \inner{\vect{r}_i}{\vect{\theta}}^{\alpha} H_i(\vect{\theta}) = t_i + \sum_{j\neq i} t_{ij} H_j(\vect{\theta}) \]
 which implies that
 \[  \inner{\vect{r}_i}{\vect{\theta}}^{\alpha} H_i(\vect{\theta}) = \sum_{j=1}^p t_{ij}H_j(\vect{\theta}) + t_i  .   \]
 In vector notation, with $\mat{\Delta}(\mat{R}\vect{\theta})^\alpha$ denoting the diagonal matrix which has $\inner{\vect{r}_i}{\vect{\theta}}^\alpha$, $i=1,...,p$, on its diagonal, we then write
 \begin{eqnarray*}
  \mat{\Delta}(\mat{R}\vect{\theta})^\alpha \mat{H}(\vect{\theta})&=& \mat{T}\mat{H}(\vect{\theta}) + \vect{t}
  \end{eqnarray*} 
  or
  \[ \mat{H}(\vect{\theta}) = \left(  \mat{\Delta}(\mat{R}\vect{\theta})^\alpha - \mat{T}  \right)^{-1}\vect{t}  . \]
  If $X_0 \sim \vect{\pi}$, we then get that the joint Laplace transform for $\vect{Y}$ is given by
  \begin{align}\label{LTmulti_rn}
   L_{\vect{Y}}(\vect{\theta}) = \vect{\pi} \left(  \mat{\Delta}(\mat{R}\vect{\theta})^\alpha - \mat{T}  \right)^{-1}\vect{t}  . 
   \end{align}
 \begin{definition}
 The joint distribution of rewards $\vect{Y}=(Y_1,...,Y_n)$, characterized by its Laplace transform \eqref{LTmulti_rn}, is said to have a multivariate fractional phase--type distribution, 
 %of the Kulkarni type
  and we shall denote it by 
  \[   \vect{Y} \sim \text{MPH}_\alpha^*(\vect{\pi},\mat{T},\mat{R}) . \]
  \end{definition}
\begin{remark}\rm \label{rem35}
Note that the only (yet subtle) difference between the Laplace transform of the GMML distribution introduced in  \cite[Eq.15]{multiml} and the corresponding expression  \eqref{LTmulti_rn} above is that the power $\alpha$ is applied after and not before the left-multiplication with the reward matrix. One consequence is that the scalar parameter $\alpha$ represents the regular variation index for all marginals alike, in contrast to the GMML construction in \cite{multiml}, where different values were possible for each component. However, the extension to powers as described in Section \ref{sec:dense} allows to alleviate that issue when desirable.\end{remark}

\subsection{Denseness properties of the $\text{MPH}_\alpha^\ast$ class and an extension}\label{sec:dense}
As members of $\text{PH}_\alpha$, the marginals of the $\text{MPH}_\alpha^\ast$ class all have regularly varying tails with (the same) index $\alpha< 1$ (which in particular entails an infinite mean). In order to allow for more flexibility, a simple extension is to consider (possibly different) powers of each random component, which leads to arbitrary positive index of regular variation for each component.\\
 Let $\vect{X}\sim \mbox{MPH}_\alpha^*(\vect{\pi},\mat{T},\mat{R})$ with density $f_{\vect{X}}(x_1,...,x_n)$. Let $\vect{\nu}=(\nu_1,...,\nu_n)$ for $\nu_i>0$, $i=1,...,n$ and consider the transformed random vector
\[  \vect{Y} = \vect{X}^{1/\vect{\nu}}=(X_1^{1/\nu_1},...,X_n^{1/\nu_n}) ,   \]
for which the joint density is given by 
\[  f_{\vect{Y}}(y_1,...,y_n)=\left( \prod_{i=1}^n \nu_i y_i^{\nu_i-1}\right) f_{\vect{X}}(y_1^{\nu_1},...,y_n^{\nu_n}).  \]
We refer to this enlarged class as the $\text{MPH}_\alpha^{\ast1/\nu}$ class. Then we have the following result:
\begin{theorem}
	{\ }
	\begin{enumerate}[(i)]
		\item The class $\mbox{MPH}_\alpha^\ast(\vect{\pi},\mat{T},\mat{R})$ is dense in the class of distributions on $\mathbb{R}_+^n$.\\
		\item For any fixed  ${\alpha}$, the class $\text{MPH}_\alpha^{\ast1/\vect{\nu}}(\vect{\pi},\mat{T},\mat{R})$ is dense in the class of distributions on $\mathbb{R}_+^n$.\\
		\item For any fixed vector of positive tail indices $(\alpha/\nu_1,...,\alpha/\nu_n)$, the class \\ $\text{MPH}_\alpha^{\ast1/\vect{\nu}}(\vect{\pi},\mat{T},\mat{R})$ is dense in the class of distributions on $\mathbb{R}_+^n$.\end{enumerate}
\end{theorem}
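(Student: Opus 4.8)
The plan is to reduce all three statements to the known denseness of Kulkarni's $\mathrm{MPH}^\ast$ class, exploiting two elementary facts: that $\alpha=1$ turns every Mittag--Leffler sojourn into an exponential one, so that the semi-Markov process of Section~\ref{ren_fractph_sec} degenerates to an ordinary Markov jump process and $\mathrm{MPH}_1^\ast(\vect\pi,\mat T,\mat R)=\mathrm{MPH}^\ast(\vect\pi,\mat T,\mat R)$; and that each deterministic power map $\vect x\mapsto\vect x^{1/\vect\nu}$ is a homeomorphism of $\mathbb{R}_+^n$, under which weak denseness is preserved (the pushforward by a fixed continuous bijection is weakly continuous, so the image of a weakly dense class is weakly dense). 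Throughout, ``dense'' is meant in the sense of weak convergence, so it suffices to exhibit, for an arbitrary target law on $\mathbb{R}_+^n$, a weakly convergent approximating sequence from the relevant class, and I will freely compose two successive approximations by a diagonal argument.

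For (i) I would note that $\mathrm{MPH}_\alpha^\ast$ contains $\mathrm{MPH}_1^\ast=\mathrm{MPH}^\ast$, which is dense; if one prefers to remain in the regime $\alpha<1$, the same follows from a limit, since letting $\alpha\uparrow1$ in \eqref{LTmulti_rn} gives $\langle\vect r_i,\vect\theta\rangle^\alpha\to\langle\vect r_i,\vect\theta\rangle$ entrywise, hence $L_{\vect Y}(\vect\theta)\to\vect\pi(\mat\Delta(\mat R\vect\theta)-\mat T)^{-1}\vect t$, the Kulkarni transform \eqref{eq:Kulkarni_r}, so that $\mathrm{MPH}_\alpha^\ast\Rightarrow\mathrm{MPH}^\ast$. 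For (iii) the vector of tail indices is prescribed but $\alpha$ is free, so I would again take $\alpha=1$: choosing $\vect\nu$ to realise the prescribed indices, the class $\mathrm{MPH}_1^{\ast1/\vect\nu}$ is exactly the image of the dense class $\mathrm{MPH}^\ast$ under the fixed homeomorphism $\vect x\mapsto\vect x^{1/\vect\nu}$, hence dense, and the $\alpha\uparrow1$ convergence plus a diagonal argument gives the statement inside the fractional regime if desired.

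The heart of the theorem is (ii), where $\alpha<1$ is genuinely fixed, so neither the escape to $\alpha=1$ nor a fixed power map is available. The idea that rescues denseness is the power transform itself. I would approximate an arbitrary target weakly by a finite mixture $\sum_{m=1}^M p_m\delta_{\vect a_m}$ with $\vect a_m\in(0,\infty)^n$, and realise this mixture as a limit within $\mathrm{MPH}_\alpha^{\ast1/\vect\nu}$ as follows: take $M$ transient states with initial probabilities $\pi_m=p_m$, let every state jump directly to absorption (so a path started in $m$ visits only state $m$, for a single $\mathrm{ML}(\alpha,\lambda_m)$ sojourn $Z_m$), and set the reward entries $r_{mk}=a_{mk}^{\nu_k}$. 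Conditionally on starting in $m$ one then has $X_k=a_{mk}^{\nu_k}Z_m$, so that $Y_k=X_k^{1/\nu_k}=a_{mk}\,Z_m^{1/\nu_k}$. Since $Z_m\in(0,\infty)$ almost surely, $Z_m^{1/\nu_k}=\exp(\nu_k^{-1}\log Z_m)\to1$ almost surely as $\nu_k\to\infty$, simultaneously in $k$, whence $\vect Y\to\vect a_m$ given the start in $m$, and therefore $\vect Y\Rightarrow\sum_m p_m\delta_{\vect a_m}$ as $\min_k\nu_k\to\infty$; a final diagonal argument over the mixture approximation and over $\vect\nu$ completes the proof.

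I expect the main conceptual obstacle to be the one that motivates (ii) in the first place: the classical phase-type denseness argument approximates a point mass by routing the chain deterministically and concentrating the sojourns (Erlang sums converge to a constant), and this mechanism is unavailable for $\alpha<1$ because Mittag--Leffler sojourns are regularly varying of index $\alpha$ and their sums never concentrate. The step I would highlight is therefore the realisation that a single heavy-tailed sojourn, rescaled by the free reward $a_{mk}^{\nu_k}$, collapses onto the constant $a_{mk}$ under the vanishing power $1/\nu_k$, and that using one dedicated state per atom keeps exactly one (hence harmless) Mittag--Leffler factor in play, so that no sum of stable contributions ever has to be controlled. The remaining points are routine bookkeeping: verifying that finite point-mass mixtures with strictly positive coordinates are weakly dense on $\mathbb{R}_+^n$ (boundary atoms handled by a further limit) and phrasing the two-stage approximation in a metric for weak convergence so that the diagonal extraction is legitimate.
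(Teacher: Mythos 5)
Your proposal is correct, and it takes a genuinely different route from the paper's. The paper's own proof is a two-line reduction: it restricts to the feed-forward subclass (block-bidiagonal $\mat{T}$ as in \eqref{tmm}, block-diagonal $\mat{R}$ as in \eqref{rmm}), observes via \eqref{iden} that on this subclass $\text{MPH}_\alpha^\ast$ coincides with the GMML class, notes that the feed-forward phase-type case $\alpha=1$ is already dense on $\mathbb{R}_+^n$, and then delegates all three items to Theorem 4.10 of \cite{multiml}. You instead prove everything from scratch. For (i) and (iii) your argument is essentially the same escape to $\alpha=1$ that underlies the cited result, and your observation that this escape is forced --- for fixed $\alpha<1$ and fixed $\vect{\nu}$ each marginal is a scale mixture of a fixed non-degenerate stable law (equivalently, $\log Y_i$ contains an independent non-degenerate summand $\log S_\alpha$), so point masses cannot be approximated --- correctly identifies why (ii) needs a separate mechanism. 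For (ii) your construction is more elementary and more transparent than the reference: one dedicated one-step-to-absorption state per atom, rewards $r_{mk}=a_{mk}^{\nu_k}$, and the single Mittag--Leffler sojourn collapsing under the vanishing power, $Z_m^{1/\nu_k}\to 1$. Note that your approximants are not of feed-forward type (every state pays all coordinates at once, so conditionally on the starting state the components are comonotone functions of one ML variable), whereas the paper's route yields coordinates driven by disjoint sojourn blocks with independent stable factors $(S_\alpha^{(k)})^{1/\nu_k}\to 1$; both collapse onto the same point masses. What your version buys is self-containedness and explicit approximating parameters; what the paper's buys is brevity and the simultaneous identification of the subclass shared with GMML. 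Two cosmetic caveats: in (iii), taking $\alpha=1$ outright makes the ``prescribed regular-variation index'' reading of the subsequent remark vacuous (powers of phase-type variables are not regularly varying), so the $\alpha\uparrow 1$ diagonal refinement you mention should be treated as the actual proof of (iii) rather than an optional add-on; and the weak denseness of finite mixtures of interior point masses on $[0,\infty)^n$, while standard, is doing real work and merits the one-line discretization argument.
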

\begin{proof}
In Section \ref{sec:feedforward}, it will be shown that for the particular subclass of feed-forward type with transition matrix \eqref{tmm} and reward matrix \eqref{rmm} the identity \eqref{iden} holds and therefore the $\text{GMML}$ and $\text{PH}_\alpha^\ast$ classes agree in that particular case. For this particular structure, the phase--type case $\alpha=1$ is still dense on $\mathbb{R}_+^n$, but then the proof of all three items above follows along the same lines as Theorem 4.10 in \cite{multiml}.
\end{proof}

\noindent Notice that (iii) in particular shows that we can approximate any distribution on $\mathbb{R}_+^n$ arbitrarily closely through distributions in  $\text{MPH}_\alpha^{\ast1/\vect{\nu}}(\vect{\pi},\mat{T},\mat{R})$ with a pre-specified regularly varying index for each marginal.

\subsection{A product representation}
We proceed to show a representation theorem which sheds some light on the dependence structure of the $\text{MPH}_\alpha^\ast$ class. 

\begin{theorem}\label{repPHstable2}
	Let  $\vect{Y}$ have Laplace transform \eqref{LTmulti_rn}. Then
	\begin{align}\label{prodrepresentation_r2}
	\vect{Y}\stackrel{d}{=}\mat{R}^T\vect{W}^{1/\alpha}\bullet \vect{S}_\alpha,
	\end{align}
	where $\vect{W}^{1/\alpha}=(W_1^{1/\alpha},\dots,W_n^{1/\alpha})$ with $\vect{W}=(W_1,...,W_n)\sim \mbox{MPH}^*(\vect{\pi
	},\mat{T},\mat{I})$ (see \eqref{eq:Kulkarni_r}), and where  $\vect{S}_\alpha=(S^1_{\alpha},\dots,S^n_{\alpha})$ is a vector of independent stable random variables, each with Laplace transform $\exp(-u^{\alpha})$. Here, $\bullet$ refers to Schur (or entry-wise) multiplication of vectors.
\end{theorem}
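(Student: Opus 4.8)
The plan is to verify \eqref{prodrepresentation_r2} at the level of joint Laplace transforms, exploiting that the vector $\vect{S}_\alpha$ attaches one independent stable factor to each of the $p$ transient states. Accordingly I would read the right-hand side in the $p$-dimensional state space: set $\vect{W}^{1/\alpha}\bullet\vect{S}_\alpha=(W_1^{1/\alpha}S^1_\alpha,\dots,W_p^{1/\alpha}S^p_\alpha)$, where $\vect{W}\sim\mathrm{MPH}^*(\vect{\pi},\mat{T},\mat{I}_p)$ is the occupation-time vector of the underlying Markov jump process (so that $W_i=\int_0^\tau \mathbf{1}\{X_t=i\}\,dt$, since the reward matrix is the identity), and $S^1_\alpha,\dots,S^p_\alpha$ are i.i.d.\ stable and independent of $\vect{W}$. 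Left-multiplication by $\mat{R}^T$ then produces the $n$-vector $\vect{Z}:=\mat{R}^T(\vect{W}^{1/\alpha}\bullet\vect{S}_\alpha)$ with components $Z_k=\sum_{i=1}^p r_{ik}W_i^{1/\alpha}S^i_\alpha$, and the goal is to show $\E[e^{-\inner{\vect{Z}}{\vect\theta}}]=L_{\vect{Y}}(\vect\theta)$ from \eqref{LTmulti_rn}.

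First I would condition on $\vect{W}$ and use the mutual independence of the $S^i_\alpha$ to factorize the conditional transform over states. Writing $\inner{\vect{Z}}{\vect\theta}=\sum_{i=1}^p \inner{\vect{r}_i}{\vect\theta}\,W_i^{1/\alpha}S^i_\alpha$, where here $\vect{r}_i$ denotes the $i$-th row of $\mat{R}$, the defining identity $\E[e^{-uS_\alpha}]=e^{-u^\alpha}$ applied with $u=\inner{\vect{r}_i}{\vect\theta}\,W_i^{1/\alpha}\ge 0$ gives $\E[\exp(-\inner{\vect{r}_i}{\vect\theta}W_i^{1/\alpha}S^i_\alpha)\mid\vect{W}]=\exp(-\inner{\vect{r}_i}{\vect\theta}^\alpha W_i)$. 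This is the crucial step: each per-state stable exactly converts the fractional power $W_i^{1/\alpha}$ back into $W_i$ while raising the coefficient to the power $\alpha$. Nonnegativity of the rewards and of the $\theta_k$ legitimizes the conditioning and the use of the real Laplace transform throughout.

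Taking the product over $i=1,\dots,p$ and then the expectation over $\vect{W}$ yields $\E[e^{-\inner{\vect{Z}}{\vect\theta}}]=\E[\exp(-\sum_{i=1}^p\inner{\vect{r}_i}{\vect\theta}^\alpha W_i)]$. I would then recognize the right-hand side as the Kulkarni transform \eqref{eq:Kulkarni_r} of $\vect{W}\sim\mathrm{MPH}^*(\vect{\pi},\mat{T},\mat{I}_p)$ evaluated at the argument vector with $i$-th entry $\inner{\vect{r}_i}{\vect\theta}^\alpha=(\mat{R}\vect\theta)_i^\alpha$. Since the reward matrix is the identity, \eqref{eq:Kulkarni_r} reads $\E[e^{-\inner{\vect{W}}{\vect{s}}}]=\vect\pi(\mat{\Delta}(\vect{s})-\mat{T})^{-1}\vect{t}$, and substituting $s_i=\inner{\vect{r}_i}{\vect\theta}^\alpha$ turns $\mat{\Delta}(\vect{s})$ into $\mat{\Delta}(\mat{R}\vect\theta)^\alpha$, exactly the diagonal matrix defined before \eqref{LTmulti_rn}. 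Hence $\E[e^{-\inner{\vect{Z}}{\vect\theta}}]=\vect\pi(\mat{\Delta}(\mat{R}\vect\theta)^\alpha-\mat{T})^{-1}\vect{t}=L_{\vect{Y}}(\vect\theta)$, and uniqueness of the multivariate Laplace transform gives the claimed equality in distribution.

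The main obstacle is conceptual rather than computational: one must take the Schur product in the $p$-dimensional state space (one independent stable per visited state) and only afterwards apply $\mat{R}^T$, rather than attaching one stable per output coordinate. If instead the stable factors were attached to the $n$ components after forming $\mat{R}^T\vect{W}^{1/\alpha}$, conditioning would produce the non-separable terms $\exp(-\sum_k \theta_k^\alpha\inner{\vect{r}_k}{\vect{W}^{1/\alpha}}^\alpha)$, which do not collapse to the diagonal form $\mat{\Delta}(\mat{R}\vect\theta)^\alpha$ and fail to reproduce \eqref{LTmulti_rn}. The per-state bookkeeping is precisely what makes the powers land on the diagonal; a related point worth stating carefully is the dual use of $\vect{r}_i$, as the columns of $\mat{R}$ in the component-indexed reward vectors and as the rows of $\mat{R}$ in the state-indexed sum above.
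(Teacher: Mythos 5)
Your proof is correct and follows essentially the same route as the paper's: condition on $\vect{W}$, use the independence of the per-state stables together with $\E[e^{-uS_\alpha}]=e^{-u^\alpha}$ to turn $\inner{\vect{r}_i}{\vect{\theta}}W_i^{1/\alpha}S_\alpha^i$ into $\inner{\vect{r}_i}{\vect{\theta}}^\alpha W_i$, and then recognize the Kulkarni transform \eqref{eq:Kulkarni_r} with identity rewards evaluated at $(\mat{R}\vect{\theta})^\alpha$. Your explicit clarification that the Schur product must be taken in the $p$-dimensional state space before applying $\mat{R}^T$ (and the remark on the row-versus-column use of $\vect{r}_i$) is a helpful reading of what the paper does implicitly via the adjoint identity $\inner{\vect{u}}{\mat{R}^T\vect{v}}=\inner{\mat{R}\vect{u}}{\vect{v}}$.
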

\begin{proof}
	We first recall that for generic vectors $\vect{u},\vect{v}$ we have
	\begin{align*}
	\langle \mat{R}\vect{u},\vect{v} \rangle=\langle\vect{u},\mat{R}^{T}\vect{v} \rangle, 
	\end{align*}
	from which
	\begin{align*}
	\E(\exp(-\langle\vect{u},\mat{R}^T\vect{W}^{1/\alpha}\bullet \vect{S}_\alpha \rangle))&=
	\int_{\mathbb{R}_+^n} \E(\exp(-\langle\mat{R}\vect{u},\vect{w}^{1/\alpha}\bullet \vect{S}_\alpha \rangle)) \dd F_{\vect{W}}(\vect{w}) \\
	&=
	\int_{\mathbb{R}_+^n} \exp(-[(\mat{R}\vect{u})_1^{\alpha}w_1+\dots+(\mat{R}\vect{u})_n^{\alpha}w_n]) \dd F_{\vect{W}}(\vect{w}) \\
	&=
	\int_{\mathbb{R}_+^n} \exp(-\langle (\mat{R}\vect{u})^{{\alpha}},\vect{w}\rangle) \dd F_{\vect{W}}(\vect{w}) \\
	%&=\vect{\pi}\left(  \mat{\Delta}(\mat{I}[(\mat{R}\vect{u})^{{\alpha}}] ) - \mat{T} \right)^{-1}\vect{t}\\
	&=\vect{\pi}\left(  \mat{\Delta}(\mat{R}\vect{u})^{{\alpha}} - \mat{T} \right)^{-1}\vect{t}.
	\end{align*}
	%that is, both variables in \eqref{prodrepresentation_r2} have the same Laplace transform.
\end{proof}
The above result gives insight into how tail dependence is created (in contrast to the analogous Theorem 6 in \cite{multiml}): the reward matrix $\mat{R}$ determines how the a priori independent stable components $S^i_{\alpha}$ are combined towards tail-dependent components $Y_i,\; (i=1,\ldots,n)$, with tail dependence asymptotically being concentrated on lines whith slopes governed by $\mat{R}$.

\subsection{Distribution of projections}\label{ren_proj_sec}
Consider 
$ \vect{Y} \sim \text{MPH}_\alpha^*(\vect{\pi},\mat{T},\mat{R}) $ 
with Laplace transform \eqref{LTmulti_rn}. 
We are interested in the distribution of the linear combination $\inner{\vect{Y}}{\vect{w}}$ of the components for some non--zero, non--negative vector $\vect{w}$. Split the state space of $E$ in $E_+$ and $E_0$ according to whether $(\mat{R}\vect{w})_i$ is positive or zero, respectively, and decompose $\vect{\pi}=(\vect{\pi}_+,\vect{\pi}_0)$ and 
\[  \mat{T} =
\begin{pmatrix}
\mat{T}_{++} & \mat{T}_{+0} \\
\mat{T}_{0+} & \mat{T}_{00}
\end{pmatrix}  \]  
accordingly. Then consider the Laplace transform for $\inner{\vect{Y}}{\vect{w}}$, 
which is
\begin{eqnarray*}
	\Exp \left( e^{-u \inner{\vect{Y}}{\vect{w}} }   \right) &=& 
	\Exp \left( e^{-\inner{\vect{Y}}{u\vect{w}}}\right) \\
	&=& \vect{\pi} \left(  \mat{\Delta}((\mat{R}u\vect{w})^\alpha) - \mat{T}  \right)^{-1}\vect{t} \\
	&=&\vect{\pi} \left( u^\alpha \mat{\Delta}((\mat{R}\vect{w})^\alpha) - \mat{T}  \right)^{-1}\vect{t} \\
	&=& \vect{\pi}
	\begin{pmatrix}
		u^\alpha \mat{\Delta}((\mat{R}\vect{w})^\alpha)_+-\mat{T}_{++} & - \mat{T}_{+0} \\
		-\mat{T}_{0+} & -\mat{T}_{00}
	\end{pmatrix}^{-1}
	\vect{t} \\
	&=&(\vect{\pi}_+,\vect{\pi}_0)
	\begin{pmatrix}
		\mat{A}_{11} & \mat{A}_{12} \\
		\mat{A}_{21} & \mat{A}_{22}
	\end{pmatrix}
	\begin{pmatrix}
		\vect{t}_+ \\
		\vect{t}_0
	\end{pmatrix},
\end{eqnarray*}
where
\begin{eqnarray*}
	\mat{A}_{11}&=&\left( u^\alpha \mat{\Delta}((\mat{R}\vect{w})^\alpha)_+ -\mat{T}_{++} - 
	\mat{T}_{+0}(-\mat{T}_{00})^{-1}\mat{T}_{0+}
	\right)^{-1} \\
	&=& \bigg( u^\alpha \mat{I} - \mat{\Delta}((\mat{R}\vect{w})^\alpha)_+^{-1} \left[ \mat{T}_{++} + 
	\mat{T}_{+0}(-\mat{T}_{00})^{-1}\mat{T}_{0+} \right]
	\bigg)^{-1} \mat{\Delta}((\mat{R}\vect{w})^\alpha)_+^{-1} \\
	&=& \left( u^\alpha \mat{I} - \mat{T}_{\vect{w}^{\vect{\alpha}}}
	\right)^{-1} \mat{\Delta}((\mat{R}\vect{w})^\alpha)_+^{-1}, \\
	& & \\
	\mat{A}_{12}&=&\left( u^\alpha \mat{I} - \mat{T}_{\vect{w}^{\vect{\alpha}}}
	\right)^{-1} \mat{\Delta}(\mat{R}\vect{w}^{\vect{\alpha}})_+^{-1}\mat{\Delta}((\mat{R}\vect{w})^\alpha)_+^{-1}, \\
	& & \\
	\mat{A}_{21}&=& (-\mat{T}_{00})^{-1}\mat{T}_{0+} \left( \mat{\Delta}(u^\alpha \mat{I} - \mat{T}_{\vect{w}^{\vect{\alpha}}}
	\right)^{-1} \mat{\Delta}((\mat{R}\vect{w})^\alpha)_+^{-1}, \\
	& & \\
	\mat{A}_{22}&=&(-\mat{T}_{00})^{-1}\left( \mat{I} + \mat{T}_{0+}\left( u^\alpha \mat{I} - \mat{T}_{\vect{w}^{\vect{\alpha}}}
	\right)^{-1} \mat{\Delta}((\mat{R}\vect{w})^\alpha)_+^{-1}\mat{T}_{+0}(-\mat{T}_{00})^{-1} \right) 
\end{eqnarray*}
and
\[ 
% \boldsymbol{\pi}_{w}=\boldsymbol{\pi}_{+}+\boldsymbol{\pi}_{0}\left(-\boldsymbol{T}_{00}\right)^{-1} \boldsymbol{T}_{0+} \text { and } 
\boldsymbol{T}_{w}=\boldsymbol{\Delta}\left((\boldsymbol{R} \boldsymbol{w})_{+}^\alpha \right)^{-1}\left(\boldsymbol{T}_{++}+\boldsymbol{T}_{+0}\left(-\boldsymbol{T}_{00}\right)^{-1} \boldsymbol{T}_{0+}\right)  . \]
Let 
\[ \boldsymbol{\pi}_{w}=\boldsymbol{\pi}_{+}+\boldsymbol{\pi}_{0}\left(-\boldsymbol{T}_{00}\right)^{-1} \boldsymbol{T}_{0+} .  \]
Then
\begin{eqnarray*}
	\vect{\pi}_+\mat{A}_{11}+\vect{\pi}_0\mat{A}_{21}&=&
	\vect{\pi}_{\vect{w}}\left( u^\alpha \mat{I} - \mat{T}_{\vect{w}}
	\right)^{-1} \mat{\Delta}((\mat{R}\vect{w})^\alpha)_+^{-1}, \\
	\vect{\pi}_+\mat{A}_{12}+\vect{\pi}_0\mat{A}_{22}&=&\vect{\pi}_0(-\mat{T}_{00})^{-1} + 
	\vect{\pi}_{\vect{w}}\left(u^\alpha \mat{I} - \mat{T}_{\vect{w}}
	\right)^{-1}\mat{\Delta}((\mat{R}\vect{w})^\alpha)_+^{-1}\mat{T}_{+0}(-\mat{T}_{00})^{-1} .
\end{eqnarray*}
Now inserting
\[ \begin{pmatrix}
\vect{t}_+ \\
\vect{t}_0
\end{pmatrix} = 
-\mat{T}\vect{e} = \begin{pmatrix}
-\mat{T}_{++}\vect{e}-\mat{T}_{+0}\vect{e} \\
-\mat{T}_{0+}\vect{e}-\mat{T}_{00}\vect{e} 
\end{pmatrix},
\]
we get 
\begin{eqnarray*}
	\lefteqn{\left(\vect{\pi}_+\mat{A}_{11}+\vect{\pi}_0\mat{A}_{21}\right) \vect{t}_+ + 
		\left(\vect{\pi}_+\mat{A}_{12}+\vect{\pi}_0\mat{A}_{22} \right)\vect{t}_0 }~~~~\\
	&=&\vect{\pi}_0 (\mat{I} - (-\mat{T}_{00})^{-1}\mat{T}_{0+})\vect{e} + 
	\vect{\pi}_{\vect{w}} \left( u^\alpha \mat{I} - \mat{T}_{\vect{w}}
	\right)^{-1}\vect{t}_{\vect{w}} \\
	&=& 1-\boldsymbol{\pi}_{w}\vect{e} + \vect{\pi}_{\vect{w}} \left(u^\alpha \mat{I} - \mat{T}_{\vect{w}}
	\right)^{-1}\vect{t}_{\vect{w}}
\end{eqnarray*}
with
\[ \vect{t}_{\vect{w}} = -\mat{T}_{\vect{w}} \vect{e}.  \]
Thus we have proved the following result. 

\begin{theorem}
	Let $ \vect{Y} \sim \text{MPH}_\alpha^*(\vect{\pi},\mat{T},\mat{R}) $ and $\vect{w}\geq \vect{0}$ be a non--zero vector. Then $\inner{\vect{Y}}{\vect{w}}$ has a distribution with an absolutely continuous part being  $\mbox{PH}_\alpha(\vect{\pi}_{\vect{w}},\mat{T}_{\vect{w}})$ distributed, where 
	\begin{eqnarray*}
		\boldsymbol{\pi}_{w}&=&\boldsymbol{\pi}_{+}+\boldsymbol{\pi}_{0}\left(-\boldsymbol{T}_{00}\right)^{-1} \boldsymbol{T}_{0+} \\
		\boldsymbol{T}_{w}&=&\boldsymbol{\Delta}\left((\boldsymbol{R} \boldsymbol{w})_{+}^\alpha \right)^{-1}\left(\boldsymbol{T}_{++}+\boldsymbol{T}_{+0}\left(-\boldsymbol{T}_{00}\right)^{-1} \boldsymbol{T}_{0+}\right)  
	\end{eqnarray*}
	and an atom at zero of size $1-\boldsymbol{\pi}_{w}\vect{e}$.
\end{theorem}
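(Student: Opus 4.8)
The plan is to obtain the distribution of the scalar $\inner{\vect{Y}}{\vect{w}}$ by evaluating the known joint Laplace transform \eqref{LTmulti_rn} along the ray $\vect{\theta}=u\vect{w}$, and then to read the distributional form off the resulting univariate transform. First I would write $\Exp(e^{-u\inner{\vect{Y}}{\vect{w}}})=\Exp(e^{-\inner{\vect{Y}}{u\vect{w}}})=\vect{\pi}(\mat{\Delta}((\mat{R}u\vect{w})^\alpha)-\mat{T})^{-1}\vect{t}$, and use that each entry $(\mat{R}u\vect{w})_i=u(\mat{R}\vect{w})_i$ is nonnegative, so that the diagonal matrix factors as $u^\alpha\mat{\Delta}((\mat{R}\vect{w})^\alpha)$. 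The goal is then to massage $\vect{\pi}(u^\alpha\mat{\Delta}((\mat{R}\vect{w})^\alpha)-\mat{T})^{-1}\vect{t}$ into the shape $c_0+\vect{\pi}_{\vect{w}}(u^\alpha\mat{I}-\mat{T}_{\vect{w}})^{-1}\vect{t}_{\vect{w}}$, which by \eqref{lu2} is the transform of an atom of mass $c_0$ at zero plus an absolutely continuous $\text{PH}_\alpha(\vect{\pi}_{\vect{w}},\mat{T}_{\vect{w}})$ component.

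The essential difficulty is that $\mat{\Delta}((\mat{R}\vect{w})^\alpha)$ need not be invertible: whenever $(\mat{R}\vect{w})_i=0$, state $i$ earns no reward for the projection and the factor $u^\alpha$ disappears from that diagonal entry. I would therefore partition the transient state space into $E_+$ (where $(\mat{R}\vect{w})_i>0$) and $E_0$ (where it vanishes), and block-decompose $\vect{\pi}=(\vect{\pi}_+,\vect{\pi}_0)$, $\mat{T}$, and $\vect{t}$ accordingly. On $E_+$ the diagonal block $\mat{\Delta}((\mat{R}\vect{w})^\alpha)_+$ is invertible, which is what makes the reduction possible; the $E_0$ block carries no $u$-dependence and will ultimately produce the atom.

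Next I would invert the resulting $2\times 2$ block matrix via the Schur complement. The $(1,1)$ block of the inverse is $(u^\alpha\mat{\Delta}((\mat{R}\vect{w})^\alpha)_+-\mat{T}_{++}-\mat{T}_{+0}(-\mat{T}_{00})^{-1}\mat{T}_{0+})^{-1}$; factoring the invertible $\mat{\Delta}((\mat{R}\vect{w})^\alpha)_+$ out on the right turns this into a resolvent $(u^\alpha\mat{I}-\mat{T}_{\vect{w}})^{-1}\mat{\Delta}((\mat{R}\vect{w})^\alpha)_+^{-1}$, with $\mat{T}_{\vect{w}}=\mat{\Delta}((\mat{R}\vect{w})^\alpha)_+^{-1}(\mat{T}_{++}+\mat{T}_{+0}(-\mat{T}_{00})^{-1}\mat{T}_{0+})$ exactly as claimed. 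This is the step where the $\text{PH}_\alpha$ structure crystallizes, and I would note in passing that $\mat{T}_{\vect{w}}$ is a genuine sub-intensity matrix: the bracketed term is the generator of the embedded process censored on $E_+$ (the instantaneous, zero-reward passages through $E_0$ being transparent to the reward clock), and left-multiplying it by the positive diagonal matrix $\mat{\Delta}((\mat{R}\vect{w})^\alpha)_+^{-1}$ scales rows by positive constants, which preserves sub-intensity structure. Hence $\text{PH}_\alpha(\vect{\pi}_{\vect{w}},\mat{T}_{\vect{w}})$ is well-defined.

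Finally I would assemble the four inverse blocks, left-multiply by $(\vect{\pi}_+,\vect{\pi}_0)$ and right-multiply by $\vect{t}=-\mat{T}\vect{e}$, and collect terms. Two simplifications carry the argument: the effective initial vector $\vect{\pi}_{\vect{w}}=\vect{\pi}_++\vect{\pi}_0(-\mat{T}_{00})^{-1}\mat{T}_{0+}$ emerges naturally from combining the contributions of the four blocks, and the $u$-independent remainder collapses to $\vect{\pi}_0(\mat{I}-(-\mat{T}_{00})^{-1}\mat{T}_{0+})\vect{e}=1-\vect{\pi}_{\vect{w}}\vect{e}$. The probabilistic meaning of this constant is that $\inner{\vect{Y}}{\vect{w}}=0$ precisely when the process is absorbed without ever entering $E_+$, so $1-\vect{\pi}_{\vect{w}}\vect{e}$ is the mass of the atom at zero. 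The remaining term is $\vect{\pi}_{\vect{w}}(u^\alpha\mat{I}-\mat{T}_{\vect{w}})^{-1}\vect{t}_{\vect{w}}$ with $\vect{t}_{\vect{w}}=-\mat{T}_{\vect{w}}\vect{e}$, which by \eqref{lu2} is the Laplace transform of $\text{PH}_\alpha(\vect{\pi}_{\vect{w}},\mat{T}_{\vect{w}})$, completing the identification. I expect the bookkeeping in this last collection step --- tracking the $(-\mat{T}_{00})^{-1}$ terms consistently through all four blocks --- to be the main source of potential error, rather than any conceptual obstacle.
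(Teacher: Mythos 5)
Your proposal follows the paper's argument essentially verbatim: evaluate the joint Laplace transform \eqref{LTmulti_rn} at $\vect{\theta}=u\vect{w}$, pull out $u^\alpha$, split the transient states into $E_+$ and $E_0$ according to the sign of $(\mat{R}\vect{w})_i$, invert the resulting $2\times 2$ block matrix via the Schur complement, factor $\mat{\Delta}((\mat{R}\vect{w})^\alpha)_+$ out of the $(1,1)$ block to expose the resolvent $(u^\alpha\mat{I}-\mat{T}_{\vect{w}})^{-1}$, and collect terms into $1-\vect{\pi}_{\vect{w}}\vect{e}+\vect{\pi}_{\vect{w}}(u^\alpha\mat{I}-\mat{T}_{\vect{w}})^{-1}\vect{t}_{\vect{w}}$. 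This is correct and is the same route as the paper; your added observation that $\mat{T}_{\vect{w}}$ is a sub-intensity matrix (via censoring on $E_0$ and positive row scaling) is a small bonus the paper leaves implicit.
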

As a simple consequence of the above result, one can retrieve the form of the marginal distributions for any choice of $\boldsymbol{T}$ and $\boldsymbol{R}$:
\begin{corollary}
	Let $\tau \sim \text{PH}_\alpha(\vect{\pi},\mat{T})$. Let $\vect{r}=(r_1,...,r_p)$ be a non--zero non--negative vector of rewards. Let $\{ X_t \}_{t\geq 0}$ denote the semi--Markov process which generates $\tau$ and define
	\[  Y = \int_0^\tau \sum_{i=1}^p r_i 1\{ X_t =i \}dt   \]
	which is the total reward earned up to time $\tau$. Then $Y$ has a 
	distribution with an absolutely continuous part having a $\mbox{PH}_\alpha(\tilde{\vect{\pi}},\tilde{\mat{T}})$ form, 
	where
	\begin{eqnarray*}
		\tilde{\vect{\pi}}&=& \boldsymbol{\pi}_{+}+\boldsymbol{\pi}_{0}\left(-\boldsymbol{T}_{00}\right)^{-1} \boldsymbol{T}_{0+} \\
		\tilde{\mat{T}}&=& \boldsymbol{\Delta}\left(\vect{r}_{+}^\alpha \right)^{-1}\left(\boldsymbol{T}_{++}+\boldsymbol{T}_{+0}\left(-\boldsymbol{T}_{00}\right)^{-1} \boldsymbol{T}_{0+}\right) 
	\end{eqnarray*}  
	and an atom at zero of size $1-\tilde{\vect{\pi}}\vect{e}$.
\end{corollary}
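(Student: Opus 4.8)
The plan is to deduce this statement directly from the preceding projection theorem, of which it is precisely the one-dimensional instance. First I would observe that the single reward variable $Y = \int_0^\tau \sum_{i=1}^p r_i 1\{X_t = i\}\,dt$ is exactly an $\text{MPH}_\alpha^*$ random vector of dimension $n=1$, namely the one obtained by choosing the reward matrix to be the single column $\mat{R}=\vect{r}$. Hence its Laplace transform is given by \eqref{LTmulti_rn}, and with $\vect{\theta}=u$ a scalar this reads $L_Y(u)=\vect{\pi}(\mat{\Delta}(\vect{r}u)^\alpha-\mat{T})^{-1}\vect{t}=\vect{\pi}(u^\alpha\mat{\Delta}(\vect{r}^\alpha)-\mat{T})^{-1}\vect{t}$, where $\mat{\Delta}(\vect{r}^\alpha)$ carries $r_i^\alpha$ on its diagonal.

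Next I would invoke the projection theorem with the weight vector $\vect{w}=(1)$, so that $\inner{\vect{Y}}{\vect{w}}=Y$. With $\mat{R}=\vect{r}$ one has $\mat{R}\vect{w}=\vect{r}$, and therefore the partition $E=E_+\cup E_0$ of the state space according to whether $(\mat{R}\vect{w})_i$ is positive or zero coincides with the partition according to whether the reward $r_i$ is positive or zero. Under this identification $(\mat{R}\vect{w})_+^\alpha=\vect{r}_+^\alpha$, and the theorem's expressions for $\vect{\pi}_{\vect{w}}$ and $\mat{T}_{\vect{w}}$ reduce verbatim to the claimed $\tilde{\vect{\pi}}$ and $\tilde{\mat{T}}$. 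The absolutely continuous part of $Y$ is thus $\text{PH}_\alpha(\tilde{\vect{\pi}},\tilde{\mat{T}})$ distributed, with an atom at zero of mass $1-\tilde{\vect{\pi}}\vect{e}$, exactly as asserted.

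The only point that requires a moment's care---and the nearest thing to an obstacle here---is the bookkeeping of the block decomposition: one must confirm that the splitting of $\mat{T}$ into $\mat{T}_{++},\mat{T}_{+0},\mat{T}_{0+},\mat{T}_{00}$ underlying the theorem's Schur-complement reduction is induced by $\vect{r}$ in precisely the same way once $\vect{w}=(1)$ is fixed. Since this matching is immediate, no new computation is needed. Should a self-contained argument be preferred over invoking the theorem, one can alternatively start from the explicit scalar Laplace transform $L_Y(u)$ above and repeat the block-matrix inversion and Schur-complement manipulation carried out in the proof of the projection theorem; this reproduces the same $\tilde{\vect{\pi}}$ and $\tilde{\mat{T}}$ but merely duplicates work already done.
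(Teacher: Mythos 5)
Your proposal is correct and matches the paper's intent exactly: the paper offers no separate proof, presenting the corollary as an immediate specialization of the preceding projection theorem, which is precisely the deduction you carry out (taking $n=1$, $\mat{R}=\vect{r}$, $\vect{w}=(1)$, so that $\mat{R}\vect{w}=\vect{r}$ and the formulas for $\vect{\pi}_{\vect{w}}$ and $\mat{T}_{\vect{w}}$ reduce verbatim to $\tilde{\vect{\pi}}$ and $\tilde{\mat{T}}$). Your check that the $E_+/E_0$ partition induced by $\mat{R}\vect{w}$ coincides with the one induced by $\vect{r}$ is the only detail worth recording, and you have recorded it.
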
 
\begin{remark}\rm In case all rewards are strictly positive, the translation between $\mbox{PH}_\alpha$ distributions with and without rewards is even simpler: Consider the process $\{ X_t\}_{t\geq 0}$ defined in \eqref{def:X-process_r} underlying a $\mbox{PH}_\alpha(\vect{\pi},\mat{T})$ distribution,
	and assume that a reward $r_i>0$ is earned when the process is in state $i$, $i=1,2,...,p$. Then the total reward earned up to the time of absorption is  $\mbox{PH}_\alpha(\vect{\pi},\mat{S})$ distributed  with 
	\[   \mat{S} = \mat{\Delta}(\vect{r}^{-\alpha})\mat{T} ,  \]
	where $\vect{r}^{-\alpha}=(r_1^{-\alpha},...,r_p^{-\alpha})$. Hence a reward of rate $r_i$ in state $i$ may be achieved by dividing row $i$ of $\mat{T}$ by $r_i^{\alpha}$. This can also be seen directly from the construction of the semi-Markov process, since the $\lambda_i$ are scale parameters.
\end{remark}

\section{Two specific examples}\label{sec:spec}
\subsection{The feed-forward case}\label{sec:feedforward}
The $\text{MPH}_\alpha^*$ class shares an important sub-class of distributions with the GMML class introduced in \cite{multiml}. The so-called \textit{feed--forward} sub-class is based on a special structure of the matrix components given as follows.

Let $\mat{C}_1,...,\mat{C}_n$ be sub--intensity matrices and let $\mat{D}_1,...,\mat{D}_n$ be non--negative matrices such that $-\mat{C}_i\vect{e}=\mat{D}_i\vect{e}$. Define the initial vector as
$\vect{\beta}=(\vect{\pi},\vect{0},...,\vect{0})$ 
and the matrix
\begin{align}\label{tmm}
\mat{T}=
\begin{pmatrix}
\mat{C}_1 & \mat{D}_1 & \mat{0} & \cdots & \mat{0} \\
\mat{0} &  \mat{C}_2 & \mat{D}_2 & \cdots & \mat{0} \\
\mat{0} & \mat{0} & \mat{C}_3 & \cdots & \mat{0} \\
\vdots & \vdots & \vdots & \vdots\vdots\vdots & \vdots \\
\mat{0} & \mat{0} &\mat{0} & \cdots & \mat{C}_n.
\end{pmatrix} .
 \end{align}
The reward matrix consists of
 \begin{align}\label{rmm}
 \mat{R}= \begin{pmatrix}
 \vect{e} & \vect{0} & \vect{0} & \cdots & \vect{0} \\
 \vect{0} & \vect{e} & \vect{0} & \cdots & \vect{0} \\
 \vect{0} & \vect{0} & \vect{e} & \cdots & \vect{0} \\
 \vdots & \vdots & \vdots & \vdots\vdots\vdots & \vdots \\
 \vect{0} & \vect{0} & \vect{0} & \cdots & \vect{e}
 \end{pmatrix} . 
 \end{align}

\noindent In this case it is immediate that \begin{equation}
\mat{\Delta}\left(\mat{R}\vect{\theta}\right)^\alpha=\mat{\Delta}\left(\mat{R}\vect{\theta}^\alpha\right)\label{iden}
\end{equation} in which case the respective distributions in the GMML and MPH$_\alpha^*$ classes coincide (cf.\ Remark \ref{rem35}). Correspondingly, the explicit forms of the Laplace transform and density can be found in Theorem 8 of \cite{multiml} (choosing  $\alpha_1=\cdots=\alpha_n=\alpha$ for the present context). 
%
% \begin{theorem}
% With $\mat{T}$ and $\mat{R}$ as defined in \eqref{tmm} and \eqref{rmm}, respectively, the Laplace transform \eqref{LTmulti_rn} can equivalently be written as
%  \begin{equation*}
%    L_{\vect{Y}}({\vect{\theta}}) = \vect{\pi}\left( \prod_{i=1}^n (\theta_i^{\alpha}\mat{I}-\mat{C}_i)^{-1}\mat{D}_i\right) \vect{e},\quad { \vect{\theta}\in \mathbb{R}_+^n}.
%  \end{equation*}
%The corresponding joint density is given by
%\begin{equation*}
%   f_{\vect{Y}}(x_1,...,x_n) = \vect{\pi}\left( \prod_{i=1}^n x_i^{\alpha-1}E_{\alpha,\alpha}(\mat{C}_i x_i^{\alpha})\mat{D}_i\right)\vect{e},\quad { x_i>0, \:\: i=1,\dots,n} . \end{equation*}
%   For the $i$'th marginal distribution of $Y_i$ we have 
%   \[  Y_i \sim \mbox{PH}_\alpha(\vect{\beta}_i,\mat{C}_i)  \]
%   where
%   \[  \vect{\beta}_i = \vect{\pi}\prod_{j=1}^{i-1}(-\mat{C}_j)^{-1}\mat{D}_j .\]
%  \end{theorem}
%
%The matrices $\mat{C}_i$ are sub--intensity matrices, providing a phase--type distributed
%time until arrival $i$. The matrices $\mat{D}_i$ are non--negative matrices containing intensities for initiating a new inter--arrival time for arrival $i+1$ at the time of the arrival $i$. Hence the matrices $\mat{D}_i$ create the dependence between the inter--arrivals. In particular, if $\mat{D}_i = \vect{c}_i\vect{\pi}_{i+1}$, where $\vect{c}_i=-\mat{C}_i\vect{e}$ is the exit rate (column) vector corresponding to $\mat{C}_i$ and $\vect{\pi}_{i+1}$ is some probability (row) vector on $\{1,2,...,p_i\}$, then the inter--arrivals are independent.

\noindent Note, however, that in general we do not have $\text{GMML}\subset \text{MPH}_\alpha^\ast$ nor that $\text{GMML} \supset \text{MPH}_\alpha^\ast$ (keeping in mind that the $\text{GMML}$ class contains distributions with possibly different tail index in each marginal and no possible tail dependence, whereas the $\text{MPH}_\alpha^\ast$ class contains distributions with the same tail index for the marginals, but possible tail dependence), see also Figure \ref{mapofdists}. 
%\end{remark}

\subsection{A two-dimensional explicit example with tail dependence}
Suppose that $\vect{X}=(X_1,X_2)\sim  \text{MPH}_\alpha^*(\vect{\pi},\mat{T},\mat{R})$, where 
\[  \vect{\pi}=(\vect{\pi}_1,\vect{\pi}_2,\vect{\pi}_3), \ \ \mat{T} = 
\begin{pmatrix}
\mat{T}_{11} & \mat{T}_{12} & \mat{T}_{13} \\
\mat{0} & \mat{T}_{22} & \mat{0} \\
\mat{0} & \mat{0} & \mat{T}_{33}
\end{pmatrix}, \ \ \mbox{and} \ \ \mat{R} =
\begin{pmatrix}
\vect{e} & \vect{e} \\
\vect{e} & \vect{0} \\
\vect{0} & \vect{e} 
\end{pmatrix},
 \]
 $\vect{\pi}_i$ are $p_i$--dimensional vectors and $\mat{T}_{ij}$ are $p_i\times p_j$ --dimensional matrices for $i=1,2,3$. As usual we let
 \[  \vect{t} = (\vect{t}_1,\vect{t}_2,\vect{t}_3)^\prime = -\mat{T}\vect{e} . \]
 Hence $\vect{t}_i$ is the vector of rates for jumping to the absorbing state from block $i=1,2,3$. Denote the set of transient states by $E=\{1,2,....,p_1+p_2+p_3\}$ and let 
 $E_1=\{1,2,...,p_1\}$ denote the states corresponding to the first block, $E_2 = \{ p_1+1,...,
 p_1+p_2\}$ the states of the second block and $E_3=\{p_1+p_2+1,...,p_1+p_2+p_3\}$ the states of the third block.

 The joint density function of the underlying multivariate phase--type distribution $\vect{X}=(X_1,X_2)\sim \mbox{MPH}^*(\vect{\pi},\mat{T},\mat{R})$ is given by (see \cite[p.448]{bladt2017matrix})
 \[ f\left(x_{1}, x_{2}\right)=\left\{\begin{aligned} \boldsymbol{\pi}_{1} e^{\boldsymbol{T}_{11} x_{2}} \boldsymbol{T}_{12} e^{\boldsymbol{T}_{22}\left(x_{1}-x_{2}\right)} 
 \boldsymbol{t}_{2}, & &  0<x_{2}<x_{1} \\
  \boldsymbol{\pi}_{1} e^{\boldsymbol{T}_{11} x_{1}} \boldsymbol{T}_{13} e^{\boldsymbol{T}_{33}\left(x_{2}-x_{1}\right)} \boldsymbol{t}_{3}, & & 0<x_{1}<x_{2} \\
   \boldsymbol{\pi}_{1} e^{\boldsymbol{T}_{11} x_{1}} \boldsymbol{t}_{1}, & &x_{1}=x_{2} \\
   \boldsymbol{\pi}_{2} e^{\boldsymbol{T}_{22} x_{1}} \boldsymbol{t}_{2}, & &x_{1}>0, x_{2}=0 \\ 
   \boldsymbol{\pi}_{3} e^{\boldsymbol{T}_{33} x_{2}} \boldsymbol{t}_{3}, & &x_{1}=0, x_{2}>0.
   % \\ 
   % 1-\boldsymbol{\pi}_{1} \boldsymbol{e}-\boldsymbol{\pi}_{2} e-\boldsymbol{\pi}_{3} \boldsymbol{e}, & &y_{1}=y_{2}=0 
   \end{aligned}\right.  \]
There is a component of sharing rewards in this structure. If the Markov jump process is started in a state in $E_1$, then reward is earned for both variables $X_1$ and $X_2$, and
if $\vect{t}_1\neq \vect{0}$, then there is a positive probability that the underlying process will exit to the absorbing state directly from the Block 1, in which case $X_1=X_2$. 

We shall now consider the distribution of $\vect{Y}$ with Laplace transform \eqref{LTmulti_rn}. First we notice that 
\[ \mat{\Delta}(\mat{R}\vect{\theta})^\alpha =
\begin{pmatrix}
(\theta_1+\theta_2)^\alpha \mat{I} & \mat{0} & \mat{0} \\
\mat{0} & \theta_1^\alpha \mat{I} & \mat{0} \\
\mat{0} & \mat{0} & \theta_2^\alpha \mat{I}
\end{pmatrix},
 \]
 where the dimensions of the identity matrices $\mat{I}$ are $p_1$, $p_2$ and $p_3$, respectively. Let 
 \begin{eqnarray*}
  \mat{A}_{11}&=&\left((\theta_1+\theta_2)^\alpha \mat{I}-\mat{T}_{11}\right)^{-1}=\int_0^\infty  e^{-(\theta_1+\theta_2)x}x^{\alpha-1}
 {E}_{\alpha,\alpha}(\mat{T}_{11}x^\alpha)\ dx\\
\mat{A}_{22}&=&\left(\theta_1^\alpha \mat{I}-\mat{T}_{22}\right)^{-1}=\int_0^\infty e^{-\theta_1 y}y^{\alpha-1}{E}_{\alpha,\alpha}(\mat{T}_{22}y^{\alpha-1})\ dy \\
\mat{A}_{33}&=&\left(\theta_2^\alpha \mat{I}-\mat{T}_{33}\right)^{-1}=\int_0^\infty e^{-\theta_2 y}y^{\alpha-1}x{E}_{\alpha,\alpha}(\mat{T}_{33}y^{\alpha-1})\ dy .
  \end{eqnarray*} 
Then
\[  \left( \mat{\Delta}(\mat{R}\vect{\theta})^\alpha - \mat{T} \right)^{-1}
=
\begin{pmatrix}
\mat{A}_{11} & \mat{A}_{11}\mat{T}_{12}\mat{A}_{22} & \mat{A}_{11}\mat{T}_{13}\mat{A}_{33} \\
\mat{0} & \mat{A}_{22} & \mat{0} \\
\mat{0} & \mat{0} & \mat{A}_{33} 
\end{pmatrix} .
 \]
 Hence
 \begin{eqnarray*}
 L_{\vect{X}}(\vect{\theta})&=&\vect{\pi} \left( \mat{\Delta}(\mat{R}\vect{\theta})^\alpha - \mat{T} \right)^{-1} \vect{t} \\
 &=&\vect{\pi}_1 \mat{A}_{11}\vect{t}_1 + \vect{\pi}_2 \mat{A}_{22}\vect{t}_2 + \vect{\pi}_3 \mat{A}_{33}\vect{t}_3+ \vect{\pi}_1\mat{A}_{11}\mat{T}_{12}\mat{A}_{22}\vect{t}_2 + \vect{\pi}_1\mat{A}_{11}\mat{T}_{13}\mat{A}_{33}\vect{t}_3 .
 \end{eqnarray*}
 The term 
 \begin{eqnarray*}
 \lefteqn{\vect{\pi}_1\mat{A}_{11}\mat{T}_{12}\mat{A}_{22}\vect{t}_2}~~\\
 &=&\vect{\pi}_1\int_0^\infty  e^{-(\theta_1+\theta_2)x}x^{\alpha-1}
 {E}_{\alpha,\alpha}(\mat{T}_{11}x^\alpha)\ dx \ \mat{T}_{12}\int_0^\infty e^{-\theta_1 y}y^{\alpha-1}{E}_{\alpha,\alpha}(\mat{T}_{22}y^{\alpha-1})\ dy\ \vect{t}_2 \\
 &=&\vect{\pi}_1\int_0^\infty \int_0^\infty e^{-(\theta_1+\theta_2)x-\theta_1y}x^{\alpha-1}y^{\alpha-1} {E}_{\alpha,\alpha}(\mat{T}_{11}x^\alpha)\mat{T}_{12}{E}_{\alpha,\alpha}(\mat{T}_{22}y^\alpha)\ dx \ dy \ \vect{t}_2 \\
 &=&\vect{\pi}_1\int_0^\infty \int_0^\infty e^{-\theta_1 (x+y) - \theta_2 x}x^{\alpha-1}y^{\alpha-1} {E}_{\alpha,\alpha}(\mat{T}_{11}x^\alpha)\mat{T}_{12}{E}_{\alpha,\alpha}(\mat{T}_{22}y^\alpha)\ dy \ dx \ \vect{t}_2 \\
 &=& \vect{\pi}_1\int_0^\infty \int_x^\infty e^{-\theta_1 z - \theta_2 x} x^{\alpha-1}(z-x)^{\alpha-1}  {E}_{\alpha,\alpha}(\mat{T}_{11}x^\alpha)\mat{T}_{12}{E}_{\alpha,\alpha}(\mat{T}_{22}(z-x)^\alpha )\ dz \ dx \vect{t}_2 \\
 &=& \vect{\pi}_1\int_0^\infty e^{-\theta_2 x} x^{\alpha-1} {E}_{\alpha,\alpha}(\mat{T}_{11}x^\alpha)\mat{T}_{12} \int_x^\infty e^{-\theta_1 z} (z-x)^{\alpha-1}{E}_{\alpha,\alpha}(\mat{T}_{22}(z-x)^\alpha ) dz \ dx \ \vect{t}_2 ,
 \end{eqnarray*}
 which is hence the Laplace transform for the joint density of the form
 \[  \vect{\pi}_1 x^{\alpha-1}{E}_{\alpha,\alpha}(\mat{T}_{11}x^\alpha)\mat{T}_{12} (y-x)^{\alpha-1}
{E}_{\alpha,\alpha}(\mat{T}_{22}(y-x)^\alpha) \vect{t}_2
  \]
  when $Y_1>Y_2$. A similar argument applies to 
  $\vect{\pi}_1\mat{A}_{11}\mat{T}_{13}\mat{A}_{33}\vect{t}_3$. The terms 
$\vect{\pi}_2 \mat{A}_{22}\vect{t}_2$ and $\vect{\pi}_3 \mat{A}_{33}\vect{t}_3$ correspond to the Laplace transform where one of the variables is equal to zero, while the term $\vect{\pi}_1 \mat{A}_{11}\vect{t}_1 $ corresponds to the joint Laplace transform when $Y_1=Y_2$. In conclusion,
\[ f_{\vect{Y}}\left(x,y\right)=\left\{
\begin{aligned} 
\vect{\pi}_1 y^{\alpha-1}{E}_{\alpha,\alpha}(\mat{T}_{11}y^\alpha)\mat{T}_{12} (x-y)^{\alpha-1}
{E}_{\alpha,\alpha}(\mat{T}_{22}(x-y)^\alpha ) \vect{t}_2, & &  0<y<x  \\
\vect{\pi}_1 x^{\alpha-1}{E}_{\alpha,\alpha}(\mat{T}_{11}x^\alpha)\mat{T}_{13} (y-x)^{\alpha-1}
{E}_{\alpha,\alpha}(\mat{T}_{33}(y-x)^\alpha) \vect{t}_2, & &  0<x<y \\
  x^{\alpha-1} \boldsymbol{\pi}_{1}  {E}_{\alpha,\alpha} (\boldsymbol{T}_{11}x^\alpha)  \boldsymbol{t}_{1}, & &x=y \\
   x^{\alpha-1} \boldsymbol{\pi}_{2}  {E}_{\alpha,\alpha} (\boldsymbol{T}_{22}x^\alpha)  \boldsymbol{t}_{2},  & &x>0, y=0 \\ 
   x^{\alpha-1} \boldsymbol{\pi}_{3}  {E}_{\alpha,\alpha} (\boldsymbol{T}_{33}x^\alpha)  \boldsymbol{t}_{3},  & &x=0, y>0.
   \end{aligned}\right.  \]
 An atom at zero (with point mass $1-\vect{\pi}\vect{e}$) could also have been achieved for both cases by letting $\vect{\pi}\vect{e}<1$. Figure \ref{bivariateMML*} depicts a corresponding density, along with simulated data from the same distribution. The parameters are chosen to be $\alpha=0.9$, $\vect{\pi}_1=(1/2,1/2)$, $\vect{\pi}_2=\vect{\pi}_3=\vect{0}$, and 
\begin{align*}
\mat{T}_{11}=\begin{pmatrix}
 -3 & 2   \\
0 &-4
 \end{pmatrix}, \quad
 \mat{T}_{12}=\mat{T}_{13}=\begin{pmatrix}
 0 & 1/2   \\
1 & 1 
 \end{pmatrix},\quad
 \mat{T}_{22}=\mat{T}_{33}\begin{pmatrix}
 -1 & 1   \\
0 &-2
 \end{pmatrix},
 \end{align*} 
which implies that there is no mass at $x=0$, $y=0$, or $x=y$. One clearly observes the resulting tail dependence across the respective slopes. 

\begin{figure}[hh]
\centering
\includegraphics[width=14cm,trim=11cm 2cm 2cm .5cm,clip]{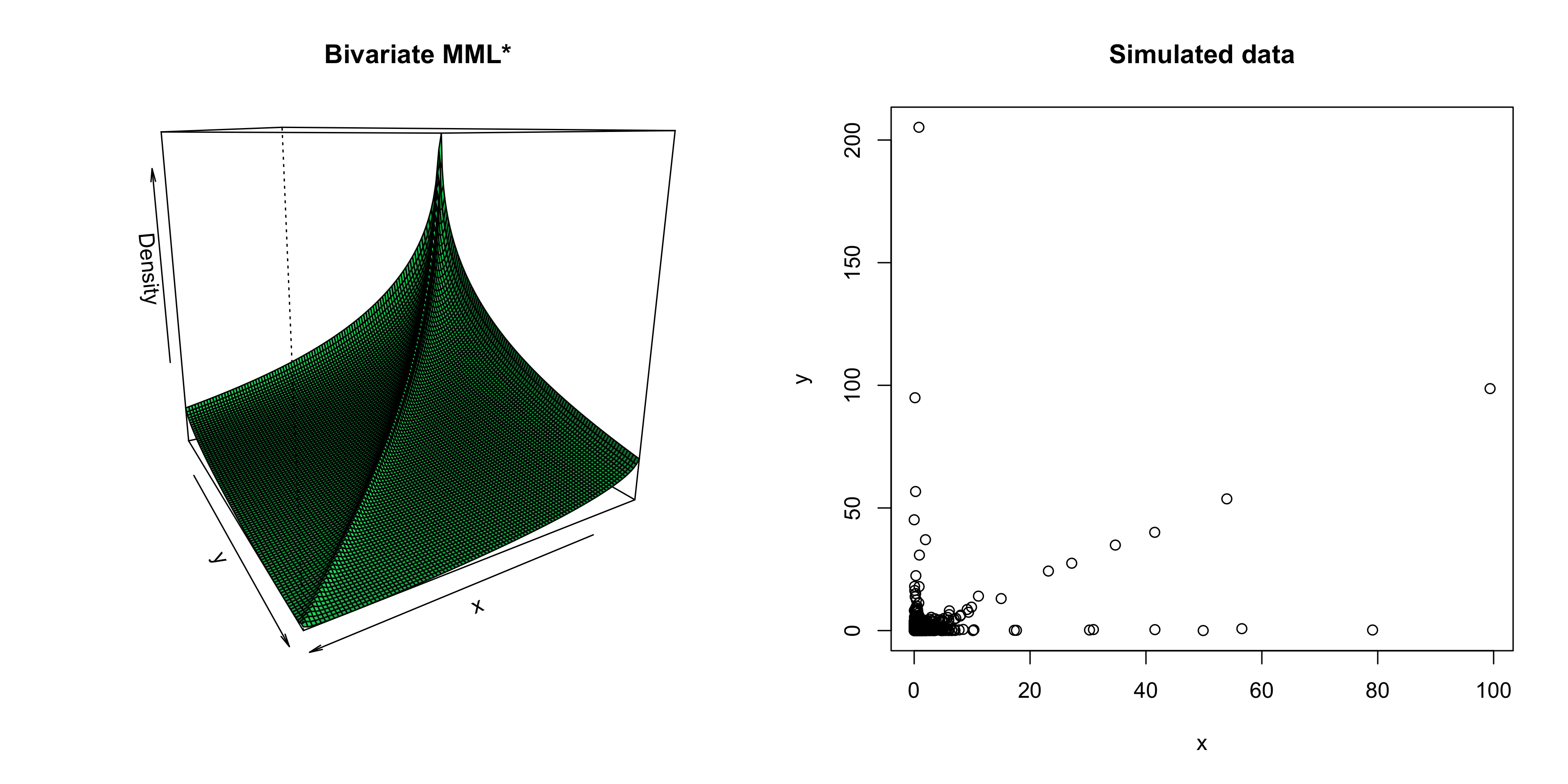}
\caption{Density and $1500$ simulated data from a bivariate MPH$_\alpha^*$ distribution.} 
\label{bivariateMML*}
\end{figure}

\section{Conclusion}\label{concl}

In this paper we propose an extension of Kulkarni's construction method to define a new class of multivariate distributions with matrix Mittag-Leffler distributed marginals. Based on a time-fractional sample path approach of an underlying semi-Markov jump process, this new class allows for dependence in the tails, yet still a rather explicit representation. We work out in detail how this class complements an earlier construction of a multivariate Mittag-Leffler distribution in \cite{multiml}. The main contribution of this paper is on the conceptual and mathematical side. It will be interesting in future research to complement the present contribution by developing fitting procedures for real multivariate data sets in applications, which exploit the explicit expressions obtained for this new class and study its versatility in more detail. It will also be challenging to study procedures that decide about the appropriate dimensions of the underlying matrices in concrete applications. 
%
%natural for extending the LT and with (G) extension also leads to extension of projection argument\\
%but all with the same alpha-index here!\\

\bibliographystyle{plain}
\bibliography{renewal-2}

\end{document}